\title[$L^p$ estimates for B-operator]{$L^p$ mean estimates for an operator preserving inequalities between polynomials}
  \newtheorem{lemma}{Lemma}
   \newtheorem{theorem}{Theorem}
   \newtheorem{corollary}{Corollary}
   \newtheorem{remark}{Remark}
 \numberwithin{equation}{section}
 \newtheorem{thmx}{Theorem}[section]
\begin{document}
\maketitle
 \date{}
   \footnotetext{\textbf{AMS Mathematics Subject Classification (2000):} 26D10, 41A17.}
    \footnotetext{\textbf{Keywords and Phrases:} $L^p$-inequalities; $\mathcal{B}_n$-operators; polynomials. }
 \begin{center}
 \author{N. A. Rather$^1$ and Suhail Gulzar$^{2,}$\footnote{The work is supported by Council of Scientific and Industrial Research, New Delhi, under grant F.No. 09/251(0047)/2012-EMR-I.}}\\
  \address{
  $^{1,2}$Department of mathematics,\\
   University of Kashmir,\\
   Harzarbal, Sringar 190006, India,\\
   email: \texttt{sgmattoo@gmail.com.}
  }
 \end{center}

 \begin{abstract}
If $P(z)$ be a polynomial of degree at most $n$ which does not vanish in $|z| < 1$, it was recently formulated by Shah and Liman \cite[\textit{Integral estimates for the family of $B$-operators, Operators and Matrices,} \textbf{5}(2011), 79 - 87]{wl} that for every $R\geq 1$, $p\geq 1$,
 \[\left\|B[P\circ\sigma](z)\right\|_p \leq\frac{R^{n}|\Lambda_n|+|\lambda_{0}|}{\left\|1+z\right\|_p}\left\|P(z)\right\|_p,\] where $B$ is a $ \mathcal{B}_{n}$-operator with parameters $\lambda_{0}, \lambda_{1}, \lambda_{2}$ in the sense of Rahman \cite{qir}, $\sigma(z)=Rz$ and $\Lambda_n=\lambda_{0}+\lambda_{1}\frac{n^{2}}{2} +\lambda_{2}\frac{n^{3}(n-1)}{8}$. Unfortunately the proof of this result is not correct. In this paper, we present a more general sharp $L_p$-inequalities for $\mathcal{B}_{n}$-operators which not only provide a correct proof of the above inequality as a special case but also  extend them for $ 0 \leq p <1$ as well. 
 \end{abstract}
 \begin{center}
 \section{\bf{ Introduction and statement of results}}
\end{center}
\hspace{4mm} Let $\mathscr{P}_n $ denote the space of all complex polynomials $P(z)=\sum_{j=0}^{n}a_{j}{z}^{j}$ of degree at most $n$.  For $P\in\mathscr{ P}_{n}$, define

$$\left\|P(z)\right\|_{0}:=\exp\left\{\frac{1}{2\pi}\int_{0}^{2\pi}\log\left| P(e^{i\theta})\right|d\theta\right\},$$\\
$$\left\|P(z)\right\|_{p}:=\left\{\frac{1}{2\pi}\int_{0}^{2\pi}\left| P(e^{i\theta})\right|^{p}\right\}^{1/p},\,\, 0< p<\infty$$ \\
$$\left\|P(z)\right\|_{\infty}:=\underset{\left|z\right|=1}{Max}\left|P(z)\right|,$$

and denote for any complex function $\sigma : \mathbb C\rightarrow \mathbb C$ the composite function of  
$P$ and $\sigma$, defined by
$\left(P\circ\sigma\right)(z):=P\left(\sigma(z)\right)\,\,\,\,(z\in\mathbb C)$, as $P\circ\sigma$.\\

\indent A famous result known as Bernstein's inequality (for reference, see \cite[p.531]{9}, \cite[p.508]{rs} or \cite{11} states that if $P\in \mathscr{P}_n$, then
\begin{equation}\label{eq1}
\left|P^{\prime}(z)\right|_\infty\leq  n\left\|P(z)\right\|_\infty,
\end{equation} 
whereas concerning the maximum modulus of $P(z)$ on the circle $\left|z\right|=R>1$, we have
\begin{equation}\label{eq2}
\left\|P(Rz)\right\|_\infty\leq  R^{n}\left\|P(z)\right\|_\infty,\,\,\, R\geq 1,
\end{equation}
(for reference, see \cite[p.442]{8} or \cite[vol.I, p.137]{9} ).\\
Inequalities \eqref{eq1} and \eqref{eq2} can be obtained by letting $p\rightarrow\infty$ in the inequalities
\begin{equation}\label{eq3}
\left\|P^{\prime}(z)\right\|_{p}\leq n\left\|P(z)\right\|_{p},p\geq 1
\end{equation} 
and
\begin{equation}\label{eq4}
\left\|P(Rz)\right\|_{p}\leq R^{n}\left\|P(z)\right\|_{p},\,\,\,\,R>1,\,\,\,\,\,p>0,
\end{equation}
respectively. Inequality \eqref{eq3} was found by Zygmund \cite{z} whereas inequality \eqref{eq4} is a simple consequence of a result of Hardy \cite{h} (see also \cite[Th. 5.5]{rs83}). Since inequality \eqref{eq3} was deduced from M. Riesz's interpolation formula \cite{r} by means of Minkowski's inequality, it was not clear, whether the restriction on $p$ was indeed essential. This question was open for a long time. Finally Arestov \cite{va} proved that \eqref{eq3} remains true for $0<p<1$ as well. 

 \indent If we restrict ourselves to the class of polynomials $P\in \mathscr{P}_n$ having no zero in $|z|<1$, then inequalities \eqref{eq1} and \eqref{eq2} can be respectively replaced by
\begin{equation}\label{eq5}
\left\|P^{\prime}(z)\right\|_\infty\leq \dfrac{n}{2}\left\|P(z)\right\|_\infty,
\end{equation}
and
\begin{equation}\label{eq6}
\left\|P(Rz)\right\|_\infty\leq \frac{R^{n}+1}{2}\left\|P(z)\right\|_\infty\,\,\,\,\,\,R>1.
\end{equation}
Inequality \eqref{eq5} was conjectured by Erd\"{o}s and later verified by Lax \cite{7}, whereas inequality \eqref{eq6} is due to Ankey and Ravilin \cite{1}.\\
Both the inequalities \eqref{eq5} and \eqref{eq6} can be obtain by letting $p\rightarrow\infty$ in the inequalities
\begin{equation}\label{eq7}
\left\|P^{\prime}(z)\right\|_{p}\leq n\frac{\left\|P(z)\right\|_{p}}{\left\|1+z\right\|_{p}},\,\,\,\,\,p\geq 0
\end{equation} 
and
\begin{equation}\label{eq8}
\left\|P(Rz)\right\|_{p}\leq \frac{\left\|R^{n}z+1\right\|_{p}}{\left\|1+z\right\|_{p}}\left\|P(z)\right\|_{p}\,\,\,\,\,\,R>1,\,\,\,\,\, p>0.
\end{equation}
Inequality \eqref{eq7} is due to De-Bruijn \cite{nb} for $p\geq 1$. Rahman and Schmeisser \cite{rs88} extended it for $0\leq p<1$ whereas the inequality \eqref{eq8} was proved by Boas and Rahman \cite{br} for $p\geq 1$ and later it was extended for $0\leq p<1$ by Rahman and Schmeisser \cite{rs88}.

\indent Q. I. Rahman \cite{qir} (see also Rahman and Schmeisser \cite[p. 538]{rs}) introduced a class $\mathcal{B}_n$ of operators $B$ that carries a polynomial $P\in\mathscr{P}_n$ into 
  \begin{equation}\label{BO}
 B[P](z):=\lambda_0P(z)+\lambda_1\left(\dfrac{nz}{2}\right)\dfrac{P^{\prime}(z)}{1!}+\lambda_2\left(\dfrac{nz}{2}\right)^2\dfrac{P^{\prime\prime}(z)}{2!},
  \end{equation}\label{BO'}
  where $\lambda_0,\lambda_1$ and $\lambda_2$ are such that all the zeros of
  \begin{equation}\label{uz}
  U(z):=\lambda_0+\lambda_1C(n,1)z+\lambda_2C(n,2)z^2\,\,\,\,\,
  \end{equation} 
  where
  $C(n,r)=\dfrac{n!}{r!(n-r)!}\,\,\,\,\,0\leq r\leq n,$
  lie in half plane $|z|\leq\left|z-n/2\right|.$\\
\indent As a generalization of inequality \eqref{eq1} and \eqref{eq5}, Q. I. Rahman \cite[inequality 5.2 and 5.3]{qir} proved that if $P\in\mathscr{P}_n,$ and $B\in \mathcal{B}_n$ then
  \begin{equation}\label{eq9}
 | B[P](z)|\leq |\Lambda_n|\|P(z)\|_\infty,\,\,\,\,\,\,\textnormal{for}\,\,\,\,\,\,\,|z|\geq 1,
  \end{equation}
  and if $P\in\mathscr{P}_n,$ $P(z)\neq 0$ in $|z|<1,$ then
  \begin{equation}\label{eq10}
 | B[P](z)|\leq\dfrac{1}{2} \left\{|\Lambda_n|+|\lambda_0|\right\}\|P(z)\|_\infty,\,\,\,\,\,\,\textnormal{for}\,\,\,\,\,\,\,|z|\geq 1,
  \end{equation}
  where \begin{equation}\label{phi}
\Lambda_n=\lambda_{0}+\lambda_{1}\frac{n^{2}}{2} +\lambda_{2}\frac{n^{3}(n-1)}{8}.
  \end{equation}
  \indent As a corresponding generalization of inequalities \eqref{eq2} and \eqref{eq4}, Rahman and Schmeisser \cite[p. 538]{rs} proved that if $P\in\mathscr{P}_n,$ then
  \begin{equation}\label{eq11}
  \left|B[P\circ\sigma](z)\right|\leq R^{n}|\Lambda_n|\left\|P(z)\right\|_\infty \,\,\,\mbox{} \textrm{for}\,\mbox{}\,\,\,|z|= 1.
  \end{equation}
  and if $P\in\mathscr{P}_n,$ $P(z)\neq 0$ in $|z|<1,$ then as a special case of Corollary 14.5.6 in \cite[p. 539]{rs}, we have
  \begin{equation}\label{eq12}
  \left|B[P\circ\sigma](z)\right|\leq \frac{1}{2}\left\{R^{n}|\Lambda_n|+|\lambda_{0}|\right\}\left\|P(z)\right\|_{\infty} \,\,\,\mbox{} \textrm{for}\,\mbox{}\,\,\,|z|=1,
  \end{equation}
  where $\sigma(z):=Rz,\,\,R\geq 1$ and $\Lambda_n$ is defined by \eqref{phi}.\\
  \indent Inequality \eqref{eq12} also follows by combining the inequalities (5.2) and (5.3) due to Rahman \cite{qir}.\\
  
  \indent As an extension of inequality \eqref{eq11} to $L_p$-norm, recently Shah and Liman \cite[Theorem 1]{wl} proved:\\
\begin{thmx}\label{tA}
   \textit{If} $P\in \mathscr P_{n}$, \textit{then for every } $R \geq 1$ \textit{and} $p \geq 1$,
  \begin{equation}\label{tae}
  \left\|B[P\circ\sigma](z)\right\|_p\leq R^{n}|\Lambda_n|\left\|P(z)\right\|_{p}, 
  \end{equation}
  \textit{where} $B\in \mathcal{B}_{n}$, $\sigma(z)=Rz$ \textit{and} $\Lambda_n$ \textit{is defined by} \eqref{phi}.\\
\end{thmx}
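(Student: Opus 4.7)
\textit{Reduction.} The plan is to split the operator $P\mapsto B[P\circ\sigma]$ into the action of $B$ and the dilation by $R$, then treat each factor by a classical inequality. A direct chain-rule computation using $(P\circ\sigma)'(z)=RP'(Rz)$ and $(P\circ\sigma)''(z)=R^{2}P''(Rz)$, substituted into \eqref{BO}, gives
\[
B[P\circ\sigma](z)=\lambda_{0}P(Rz)+\lambda_{1}\frac{n(Rz)}{2}P'(Rz)+\lambda_{2}\frac{(n(Rz))^{2}}{8}P''(Rz)=B[P](Rz).
\]
Since $B[P]\in\mathscr{P}_{n}$, Hardy's inequality \eqref{eq4} yields $\|B[P\circ\sigma]\|_{p}=\|B[P](R\,\cdot\,)\|_{p}\leq R^{n}\|B[P]\|_{p}$, so the theorem reduces to proving the ``$R=1$'' estimate $\|B[P]\|_{p}\leq|\Lambda_{n}|\,\|P\|_{p}$ for every $P\in\mathscr{P}_{n}$.

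\textit{Lifting from $L^{\infty}$ to $L^{p}$.} Rahman's inequality \eqref{eq9}, restricted to $|z|=1$, already gives the $L^{\infty}$ version $\|B[P]\|_{\infty}\leq|\Lambda_{n}|\,\|P\|_{\infty}$. I would promote this to the $L^{p}$ version via Arestov's theorem \cite{va}. The operator $B$ acts on the monomial basis as a Fourier multiplier, $B[z^{j}]=\phi(j)z^{j}$ with $\phi(j)=\lambda_{0}+\lambda_{1}(nj/2)+\lambda_{2}n^{2}j(j-1)/8$, and the hypothesis that the zeros of the polynomial $U$ in \eqref{uz} lie in $|z|\leq|z-n/2|$ is exactly the admissibility condition in Arestov's framework. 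Arestov's theorem then transfers the $L^{\infty}\!\to\!L^{\infty}$ bound into the $L^{p}\!\to\!L^{p}$ bound with the \emph{same} constant for all $p\geq 0$, so $\|B[P]\|_{p}\leq|\Lambda_{n}|\,\|P\|_{p}$; combining with the reduction above yields \eqref{tae}.

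\textit{Main obstacle.} The step that requires genuine care is the verification that $B\in\mathcal{B}_{n}$ falls within the class of operators for which Arestov's convexity argument applies, and more concretely that $|\Lambda_{n}|$ is the sharp $L^{\infty}\to L^{\infty}$ constant rather than some larger quantity. A naive attempt --- bounding $\|B[P]\|_{p}$ term-by-term using Minkowski's inequality together with the Bernstein-type $L^{p}$ estimate \eqref{eq3} --- produces only the weaker constant $|\lambda_{0}|+|\lambda_{1}|n^{2}/2+|\lambda_{2}|n^{3}(n-1)/8$, which loses the cancellations built into $\Lambda_{n}$; this is presumably where the original proof of Shah and Liman breaks down. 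Obtaining the sharp constant $|\Lambda_{n}|$ really does require Arestov's mechanism, exploiting the fact that the admissibility condition on $U$ forces the extremal polynomials for $\|B[P]\|_{\infty}/\|P\|_{\infty}$ to take a known self-inversive form to which the convexity transfer applies.
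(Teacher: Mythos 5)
The paper itself contains no proof of Theorem \ref{tA}: it is quoted from Shah and Liman \cite{wl}, and the authors' own contribution (Theorem \ref{t2} and its corollaries) concerns only polynomials with no zeros in $|z|<1$. So your argument can only be measured against the toolkit the paper deploys elsewhere, namely Lemma \ref{l1} and Arestov's result (Lemma \ref{l4}) --- and your proof is essentially a correct, and rather clean, application of exactly that toolkit. The identity $B[P\circ\sigma](z)=B[P](Rz)$ does hold, since $(nz/2)^k\frac{d^k}{dz^k}\{P(Rz)\}=(nRz/2)^kP^{(k)}(Rz)$ for $k=0,1,2$; together with \eqref{eq4} this reduces the theorem to $\|B[P]\|_p\leq|\Lambda_n|\|P\|_p$, and since $B$ is the coefficient multiplier $\psi_\delta$ with $\delta_j=\lambda_0+\lambda_1nj/2+\lambda_2n^2j(j-1)/8$, $\delta_0=\lambda_0$, $\delta_n=\Lambda_n$, Lemma \ref{l4} is the right tool.

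Two points in your write-up need repair before the argument is complete. First, Arestov's theorem is not a transfer principle that promotes the $L^\infty$ bound \eqref{eq9} to $L^p$ with the same constant; as stated in Lemma \ref{l4} it gives, for an \emph{admissible} $\psi_\delta$, the constant $C(\delta,n)=\max(|\delta_0|,|\delta_n|)=\max(|\lambda_0|,|\Lambda_n|)$, and the admissibility of $B$ is not ``exactly'' the hypothesis on $U$ in \eqref{uz} but a consequence of it, namely the paper's Lemma \ref{l1} (Marden, Corollary 18.3): $B$ preserves the class of polynomials with all zeros in $|z|\leq1$. Second, to obtain the constant $|\Lambda_n|$ rather than $\max(|\lambda_0|,|\Lambda_n|)$ you must verify $|\lambda_0|\leq|\Lambda_n|$; your closing paragraph gestures at extremal self-inversive polynomials, but the actual verification is elementary: $\lambda_0=U(0)$ and $\Lambda_n=U(n/2)$, so factoring $U$ over its zeros $z_j$ and using the hypothesis $|z_j|\leq|z_j-n/2|$ gives $|U(0)|\leq|U(n/2)|$ (the degenerate cases $\lambda_2=0$, or $\lambda_1=\lambda_2=0$, are immediate). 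With these two insertions your proof is valid, and it establishes \eqref{tae} for all $p>0$, not merely $p\geq1$.
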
  
While seeking the analogue of \eqref{eq12} in $L_p$ norm, they \cite[Theorem 2]{wl}  have made an incomplete attempt by claiming to have proved the following result:\\

\begin{thmx}\label{tB}
\textit{If} $P\in \mathscr {P}_{n}$, \textit{and} $P(z)$ \textit{does not vanish for} $|z|\leq 1,$ \textit{then for each} $p\geq 1$, $R \geq 1$,
\begin{equation} \label{tbe}
\left\|B[P\circ\sigma](z)\right\|_p \leq \frac{R^{n}|\Lambda_n|+|\lambda_{0}|}{\left\|1+z\right\|_{p}}\left\|P(z)\right\|_{p},
\end{equation}
\textit{where} $B\in \mathcal{B}_{n}$, $\sigma(z)=Rz$ \textit{and} $\Lambda_n$ \textit{is defined by} \eqref{phi}.\\
\end{thmx}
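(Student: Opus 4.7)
The plan is to combine a pointwise $L^\infty$ inequality on the unit circle with Arestov's theorem and a phase-integration argument, following the paradigm used by Rahman--Schmeisser to deduce \eqref{eq7} and \eqref{eq8}. Let $Q(z):=z^n\overline{P(1/\bar z)}$. Since $P$ has no zero in $|z|<1$, one has $|Q(z)|\geq|P(z)|$ for $|z|\geq 1$ with equality on $|z|=1$, and $Q$ has all its zeros in $|z|\leq 1$. A zero-counting argument of Laguerre/apolar type, exploiting the hypothesis that the zeros of $U(z)$ from \eqref{uz} lie in the half-plane $|z|\leq|z-n/2|$, lifts the boundary identity $|P|=|Q|$ on $|z|=1$ to the operator-level comparison $|B[P\circ\sigma](z)|\leq|B[Q\circ\sigma](z)|$ on $|z|=1$.

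The heart of the argument is the sharper pointwise bound
$$\bigl|B[P\circ\sigma](z)+e^{i\alpha}B[Q\circ\sigma](z)\bigr|\;\leq\;\bigl(R^n|\Lambda_n|+|\lambda_0|\bigr)\max_{|\zeta|=1}|P(\zeta)|,\qquad|z|=1,\ \alpha\in\mathbb{R},$$
which is the $\mathcal{B}_n$-analogue, for the composite $P\circ\sigma$, of the additive combination underlying \eqref{eq11} and \eqref{eq12}. I would prove it by forming an auxiliary polynomial built from $B[P\circ\sigma](z)$, $B[Q\circ\sigma](z)$ and $P$ and using Rahman's zero-location machinery (specifically the half-plane hypothesis on the zeros of $U(z)$) to show that this polynomial cannot vanish outside the closed unit disk when a certain auxiliary parameter has modulus greater than one; by the usual Laguerre/apolar argument this forces the stated inequality.

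To promote this supremum bound to the $L^p$ bound for $0\leq p<\infty$, I would apply Arestov's theorem to the operator $P\mapsto B[P\circ\sigma]+e^{i\alpha}B[Q\circ\sigma]$ (viewed as a real-linear admissible operator on $\mathscr{P}_n$, the admissibility following from the half-plane hypothesis on $U(z)$), obtaining
$$\|B[P\circ\sigma]+e^{i\alpha}B[Q\circ\sigma]\|_p\;\leq\;\bigl(R^n|\Lambda_n|+|\lambda_0|\bigr)\|P\|_p\qquad(\alpha\in\mathbb{R}).$$
Raising to the $p$-th power and integrating over $\alpha\in[0,2\pi)$, Fubini rewrites the left-hand side as a double integral in $F:=B[P\circ\sigma]$ and $G:=B[Q\circ\sigma]$, which satisfy $|F|\leq|G|$ on $|z|=1$ by the previous comparison. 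A direct calculus argument (the map $s\mapsto\int_0^{2\pi}|r+se^{i\alpha}|^p\,d\alpha$ is non-decreasing on $s\in[r,\infty)$ for every $p>0$, as one sees from the non-negativity of $r\cos\alpha+s$ on that range in the integrand of the $s$-derivative) gives the one-sided bound $\int_0^{2\pi}|F+e^{i\alpha}G|^p\,d\alpha\geq|F|^p\int_0^{2\pi}|1+e^{i\alpha}|^p\,d\alpha$ pointwise in $\theta$; integrating in $\theta$ and rearranging produces $\|1+z\|_p^p\cdot\|B[P\circ\sigma]\|_p^p\leq(R^n|\Lambda_n|+|\lambda_0|)^p\|P\|_p^p$, which is the stated inequality.

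The principal obstacle is the rotated pointwise inequality: the zero-counting analysis must simultaneously accommodate both $B[P\circ\sigma](z)$ and $B[Q\circ\sigma](z)$ with the phased coefficient $R^n\Lambda_n+e^{i\alpha}\lambda_0$, and constructing the right auxiliary polynomial and checking its zero-free region is the delicate step. A secondary difficulty is ensuring that $P\mapsto B[P\circ\sigma]+e^{i\alpha}B[Q\circ\sigma]$ fits Arestov's framework uniformly in $\alpha$ despite the conjugate-linear dependence on $P$ coming from $Q$; handling this cleanly is what is needed for the $L^p$ extension to $0\leq p<1$.
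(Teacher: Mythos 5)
Your outline has the same skeleton as the paper's actual route to Theorem \ref{tB} (which is obtained from Theorem \ref{t2} via Corollaries \ref{c2} and \ref{c3}): first the boundary comparison $|B[P\circ\sigma](z)|\le |B[P^{\star}\circ\sigma](z)|$ on $|z|=1$, proved by Rouch\'e-type zero location together with the fact that $B$ preserves zeros in the closed unit disk (Lemmas \ref{l1}--\ref{l3}); then Arestov's inequality for an admissible operator carrying a phase $e^{i\gamma}$; then integration in $\gamma$ using the monotonicity of $s\mapsto\int_0^{2\pi}|r+se^{i\gamma}|^p\,d\gamma$. However, the issue you set aside as a ``secondary difficulty'' is in fact the gap, and it is exactly the point at which the published proof of Shah and Liman collapses. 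The map $P\mapsto B[P^{\star}\circ\sigma]$ sends $a_kz^k$ to a constant multiple of $\overline{a_k}\,z^{n-k}$: it is conjugate-linear and anti-diagonal in the coefficients of $P$. Consequently $P\mapsto B[P\circ\sigma]+e^{i\gamma}B[P^{\star}\circ\sigma]$ is not an operator of the form $\psi_\delta P(z)=\sum_j\delta_j a_j z^j$, and Arestov's theorem (Lemma \ref{l4}) does not apply to it; there is no ``real-linear'' version of that theorem available to invoke, so your central $L^p$ step as written does not go through.

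The missing idea is to replace $B[P^{\star}\circ\sigma](z)$ by its conjugate-reciprocal $\left(B[P^{\star}\circ\sigma]\right)^{\star}(z)=z^{n}\overline{B[P^{\star}\circ\sigma](1/\overline{z})}$. This polynomial has the same modulus as $B[P^{\star}\circ\sigma](z)$ on $|z|=1$, so the boundary comparison is unaffected, but the double conjugation makes it a genuine complex-linear diagonal multiplier $\sum_k\overline{d_k}\,a_kz^k$; moreover it has all its zeros in $|z|\ge 1$, which is what renders $\psi_\delta P=\{B[P\circ\sigma]\}e^{i\gamma}+B[P^{\star}\circ\sigma]^{\star}$ admissible with $C(\delta,n)=|R^{n}\Lambda_{n}e^{i\gamma}+\overline{\lambda_{0}}|$, and the constant $R^{n}|\Lambda_n|+|\lambda_0|$ then emerges after the $\gamma$-integration. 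Without this device your plan either stalls at the Arestov step or, if one pretends that $|B[P^{\star}\circ\sigma](z)|=|B[(P^{\star}\circ\sigma)^{\star}](z)|$ on $|z|=1$, reproduces precisely the error this paper was written to correct (for $P(z)=z^{n}$ these two moduli are $|\lambda_0|$ and $|\Lambda_n|$ respectively). A smaller remark: the pointwise sup-norm bound you call the heart of the argument is not what Arestov's theorem consumes; what is needed is zero-preservation (admissibility) of a coefficient-multiplier operator, and the resulting constant is $\max(|\delta_0|,|\delta_n|)$, not anything derived from an $L^{\infty}$ estimate.
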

 
\indent Unfortunately the proof of inequality \eqref{tbe} and other related results including the key lemma \cite[Lemma 4]{wl} given by Shah and Liman is not correct. The reason being that the authors in \cite{wl} deduce \cite[line 10 from line 7 on page 84, line 19 on page 85 from Lemma 3, line 16 from line 14 on page 86]{wl}  by using the argument that if $P^{\star}(z):= z^{n}\overline{P(1/\overline{z})}$, then for $\sigma(z)=Rz$, $R \geq 1$ and $|z|=1,$
$$|B[P^{\star}\circ\sigma](z)|=|B[(P^{\star}\circ\sigma)^{\star}](z)|,$$
which is not true, in general, for every $R\geq 1$ and $|z|=1$. To  see this, let
\[P(z)=a_{n}z^{n}+\cdots+a_{k}z^{k}+\cdots+a_{1}z+a_{0}\] be an arbitrary polynomial of degree $n$, then 
\[P^{\star}(z):=z^{n}\overline{P(1/\overline{z})}= \bar{a_{0}}z^{n}+\bar{a_{1}}z^{n-1}+\cdots+\bar{a_{k}}z^{n-k}+\cdots+\bar{a_{n}}.\]
Now with $\omega_{1}:=\lambda_{1}n/2$ and $\omega_{2}:=\lambda_{2}n^{2}/8$, we have
\[B[P^{\star}\circ\sigma](z)= \sum_{k=0}^{n}\left(\lambda_{0}+\omega_{1}(n-k)+\omega_{2}(n-k)(n-k-1)\right)\bar{a_{k}}z^{n-k}R^{n-k},\]
and in particular for $|z|=1$, we get
\[B[P^{\star}\circ\sigma](z)= R^{n}z^{n}\sum_{k=0}^{n}\left(\lambda_{0}+\omega_{1}(n-k)+\omega_{2}(n-k)(n-k-1)\right)\overline{{a_{k}}\left(\frac{z}{R}\right)^{k}},\]
whence
\[|B[P^{\star}\circ\sigma](z)|= R^{n}\left|\sum_{k=0}^{n}\overline{\left(\lambda_{0}+\omega_{1}(n-k)+\omega_{2}(n-k)(n-k-1)\right)}a_{k}\left(\frac{z}{R}\right)^{k}\right|.\]
But
\[|B[(P^{\star}\circ\sigma)^{\star}](z)|=R^{n}\left|\sum_{k=0}^{n}\left(\lambda_{0}+\omega_{1}k+\omega_{2}k(k-1)\right)a_{k}\left(\frac{z}{R}\right)^{k}\right|,\]
so the asserted identity does not hold in general for every $R\geq 1$ and $|z|=1$ as e.g. the immediate counterexample of $P(z):=z^n$ demonstrates in view of $P^{\star}(z)=1$, $|B[P^{\star}\circ\sigma](z)|=|\lambda_0|$ and\\
$$|B[(P^{\star}\circ\sigma)^{\star}](z)|=|\lambda_{0}+\lambda_{1}(n^{2}/2) +\lambda_{2}n^{3}(n-1)/8|\,\,\,\,(|z|=1).$$\\
As claimed by \cite{wl}, Theorem \ref{tB} is sharp has remained to be verified. In fact, this claim is also wrong.\\

\indent The main aim of this paper is to establish  $L_p$-mean extensions of the inequality \eqref{eq12} for $0\leq p<\infty$ and present correct proofs of the results mentioned in \cite{wl}.
 In this direction, we present the following interesting compact generalization of Theorem \ref{tB} which yields $L_p$ mean extension of the inequality \eqref{eq10} for $0\leq p <\infty$ which  among other things includes a correct proof of inequality \eqref{tbe} for $1\leq p <\infty$ as a special case.

\begin{theorem}\label{t2}
If $P\in \mathscr{P}_n$ and $P(z)$ does not vanish for $|z|<1,$ then for $\alpha,\delta\in \mathbb{C}$ with $|\alpha|\leq 1,|\delta|\leq 1,$ $0\leq p<\infty$ and $R>1,$
\begin{align}\label{t2e2}\nonumber
\Big\|B[P\circ\sigma](e^{i\theta})&-\alpha B[P](e^{i\theta})+\delta\Big\{\dfrac{(|R^n-\alpha||\Lambda_n|-|1-\alpha||\lambda_0|)m}{2}\Big\}\Big\|_p\\&\leq \dfrac{\left\|(R^n-\alpha)\Lambda_nz+(1-\alpha)\lambda_0\right\|_p}{\|1+z\|_p}\left\|P(z)\right\|_p,
\end{align}
where
$m=Min_{|z|=1}|P(z)|,$ $B\in \mathcal{B}_n,$ $\sigma(z)=Rz$ and $\Lambda_n$ is defined by \eqref{phi}. The result is best possible and equality in \eqref{t2e2} holds for $P(z)=az^n+b,$ $|a|=|b|=1.$
\end{theorem}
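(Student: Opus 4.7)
The plan is to prove Theorem \ref{t2} in three stages: (i) establish a pointwise paired inequality on $|z|=1$ that properly exploits the hypothesis $P\neq 0$ in $|z|<1$; (ii) incorporate the minimum modulus $m$ via a Rouché-based substitution $P\mapsto P-\mu m$ with $|\mu|<1$; and (iii) invoke Arestov's interpolation theorem to raise the resulting $L^\infty$ bound to $L^p$ for $0\leq p<\infty$.

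First I would set $P^{\star}(z):=z^n\overline{P(1/\bar z)}$ and derive the correct operator identity $B[(P\circ\sigma)^{\star}](z)=R^{n}\,B[P^{\star}](z/R)$, which follows from $(P\circ\sigma)^{\star}(z)=R^{n}P^{\star}(z/R)$ and the linearity of $B$; this is precisely the identity that replaces the flawed one in \cite{wl}. Using this identity together with the $L^\infty$ estimates \eqref{eq11}--\eqref{eq12} applied to both $P$ and $P^{\star}$, and a Bernstein--Lax type argument exploiting the hypothesis $P\neq 0$ in $|z|<1$, I would then prove the paired pointwise inequality
\[
\bigl|B[P\circ\sigma](z)-\alpha B[P](z)\bigr|+\bigl|R^{n}B[P^{\star}](z/R)-\alpha B[P^{\star}](z)\bigr|\leq\bigl(|R^{n}-\alpha||\Lambda_n|+|1-\alpha||\lambda_0|\bigr)\|P\|_\infty
\]
for $|\alpha|\leq 1$, $R>1$, and $|z|=1$; this is the $L^\infty$ engine of the whole argument.

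To bring in the minimum modulus, apply this paired inequality to $F(z):=P(z)-\mu m$ with $|\mu|<1$. Rouché's theorem, applied via $|\mu m|<m\leq|P(z)|$ on $|z|=1$, ensures that $F$ inherits the non-vanishing property from $P$. The linearity of $B$ gives $B[F\circ\sigma]-\alpha B[F]=B[P\circ\sigma]-\alpha B[P]-(1-\alpha)\mu m\lambda_0$, together with a parallel expression for $F^{\star}=P^{\star}-\bar\mu m z^{n}$; choosing the argument of $\mu$ so that $-(1-\alpha)\mu m\lambda_0$ is aligned with $\delta(|R^{n}-\alpha||\Lambda_n|-|1-\alpha||\lambda_0|)m/2$ and letting $|\mu|\to 1$ extracts the pointwise $L^\infty$ form of \eqref{t2e2} for every $|\delta|\leq 1$.

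For the $L^p$ extension, for each fixed $|\delta|\leq 1$ consider the linear operator $\mathcal T_\delta\colon\mathscr P_n\to\mathscr P_n$ whose action produces the left-hand side of \eqref{t2e2} (the $m$-term realised linearly through the Rouché substitution above, and the $P^{\star}$-term rewritten as a coefficient-multiplier in the $a_k$'s of $P$). The pointwise $L^\infty$ bound, averaged over an auxiliary unit-modulus parameter $\beta$ coupling the $P$- and $P^{\star}$-terms via the standard identity $\int_{0}^{2\pi}|A+e^{i\psi}B|^{p}\,d\psi/(2\pi)=\bigl\||A|+|B|z\bigr\|_{p}^{p}$, sharpens the raw $L^\infty$ constant $(|R^{n}-\alpha||\Lambda_n|+|1-\alpha||\lambda_0|)/2$ into the claimed ratio $\|(R^{n}-\alpha)\Lambda_n z+(1-\alpha)\lambda_0\|_p/\|1+z\|_p$, and Arestov's theorem applied uniformly in $\beta$ delivers the $L^p$ inequality for $0\leq p<\infty$. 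Sharpness is verified directly on $P(z)=az^n+b$ with $|a|=|b|=1$, for which $m=0$ and both sides of \eqref{t2e2} evaluate to $\|(R^{n}-\alpha)\Lambda_n z+(1-\alpha)\lambda_0\|_p$. The principal obstacle is Step~1, where the paired pointwise inequality must be derived rigorously from the corrected $z/R$ identity; a secondary subtlety is the antilinearity of $P\mapsto P^{\star}$, which forces a careful reformulation of the operator involving $B[P^{\star}]$ as a genuine linear operator on $\mathscr P_n$ before Arestov becomes applicable.
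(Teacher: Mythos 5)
Your overall architecture (pass to $P^{\star}$, handle $m$ by a Rouch\'e perturbation, finish with Arestov) points in the right direction, and your corrected identity $B[(P\circ\sigma)^{\star}](z)=R^{n}B[P^{\star}](z/R)$ is in fact valid, but your central step is the wrong one for an $L^{p}$ result. Your declared ``engine'' is a paired \emph{sum} inequality bounded by $(|R^{n}-\alpha||\Lambda_n|+|1-\alpha||\lambda_0|)\|P\|_{\infty}$. A pointwise bound against $\|P\|_{\infty}$ cannot be integrated into a bound against $\|P\|_{p}$, and no averaging over an auxiliary unimodular parameter will convert one into the other. What the argument actually requires is the pointwise \emph{domination} $|B[P\circ\sigma](z)-\alpha B[P](z)|\leq|B[P^{\star}\circ\sigma](z)-\alpha B[P^{\star}](z)|$ for $|z|\geq1$ (Lemma \ref{l3}, proved by Rouch\'e together with the fact, Lemma \ref{l1}, that $B$ maps polynomials with all zeros in $|z|\leq 1$ to polynomials with all zeros in $|z|\leq 1$). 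This domination is needed twice: first, in strict form inside $|z|<1$, to show that the operator $\psi_\delta P=\{B[P\circ\sigma]-\alpha B[P]\}e^{i\gamma}+B[P^{\star}\circ\sigma]^{\star}-\bar{\alpha}B[P^{\star}]^{\star}$ is \emph{admissible} in Arestov's sense --- without verifying admissibility and computing $C(\delta,n)=\max(|\delta_0|,|\delta_n|)$, Lemma \ref{l4} simply does not apply; second, to guarantee that the ratio $s$ of the two moduli is at least $1$, which is what justifies the averaging inequality $\int_{0}^{2\pi}|s+e^{i\gamma}|^{p}\,d\gamma\geq\int_{0}^{2\pi}|1+e^{i\gamma}|^{p}\,d\gamma$ and hence produces the denominator $\|1+z\|_{p}$. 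Neither of these appears in your plan.

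The $m$-term is also underdetermined in your sketch. Perturbing $P$ by $\mu m$ only shifts the $P$-side by $(1-\alpha)\mu m\lambda_0$; to reach the constant $(|R^{n}-\alpha||\Lambda_n|-|1-\alpha||\lambda_0|)m$ one must simultaneously control the induced shift $\bar{\mu}(R^{n}-\alpha)\Lambda_n z^{n}m$ on the $P^{\star}$-side, and for that one needs the lower bound $|B[P^{\star}\circ\sigma](z)-\alpha B[P^{\star}](z)|\geq|R^{n}-\alpha||\Lambda_n|m$ valid for polynomials with all zeros in $|z|\leq1$ (Lemma \ref{l2'}), so that the argument of $\mu$ can be chosen to \emph{subtract} moduli on the dominating side rather than merely align a constant with $\delta$. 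Finally, the resulting one-sided gap must be redistributed as $\pm\tfrac{1}{2}(|R^{n}-\alpha||\Lambda_n|-|1-\alpha||\lambda_0|)m$ between the dominated and the dominating terms before the $\gamma$-average; this is exactly the content of Lemma \ref{abc} (if $B+C\leq A$ then $|(A-C)e^{i\gamma}+(B+C)|\leq|Ae^{i\gamma}+B|$) and has no counterpart in your proposal. As written, your outline would at best recover the $L^{\infty}$ case of the theorem.
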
 
Setting $\delta=0$ in \eqref{t2e2}, we get the following result.
\begin{corollary}\label{c1}
If $P\in \mathscr{P}_n$ and $P(z)$ does not vanish for $|z|<1,$ then for $\alpha,\delta\in \mathbb{C}$ with $|\alpha|\leq 1,|\delta|\leq 1,$ $0\leq p<\infty$ and $R>1,$
\begin{align}\label{ce1}
\Big\|B[P\circ\sigma](e^{i\theta})-\alpha B[P](e^{i\theta})\Big\|_p\leq \dfrac{\left\|(R^n-\alpha)\Lambda_nz+(1-\alpha)\lambda_0\right\|_p}{\|1+z\|_p}\left\|P(z)\right\|_p,
\end{align}
 $B\in \mathcal{B}_n,$ $\sigma(z)=Rz$ and $\Lambda_n$ is defined by \eqref{phi}. The result is best possible and equality in \eqref{t2e2} holds for $P(z)=az^n+b,$ $|a|=|b|=1.$
\end{corollary}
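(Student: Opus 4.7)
The plan is to follow the standard two-step recipe for operator $L_p$-inequalities: first establish a pointwise companion on $|z|=1$, then lift it to $L_p$ means by integrating against a phase parameter and invoking Arestov's theorem, in the spirit of Rahman-Schmeisser \cite{rs88}.

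Introduce the reciprocal polynomial $Q(z):=z^n\overline{P(1/\bar z)}$, which satisfies $|Q(z)|=|P(z)|$ on $|z|=1$ and has no zero in $|z|>1$ because $P$ has no zero in $|z|<1$. Setting $H(z):=B[P\circ\sigma](z)-\alpha B[P](z)$ and $K(z):=B[Q\circ\sigma](z)-\alpha B[Q](z)$, I first plan to prove the base pointwise estimate $|H(z)|\leq|K(z)|$ on $|z|=1$, an operator-level analogue of Rahman's \eqref{eq10} that should follow by adapting his argument through the subsidiary polynomial $U$ of \eqref{uz} and the prescribed location of its zeros. To incorporate the minimum-modulus term, I apply this base estimate to the perturbed polynomial $F_\beta(z):=P(z)-\beta m$: for $|\beta|\leq 1$, Rouch\'e's theorem (using $|P(z)|\geq m\geq|\beta m|$ on $|z|=1$) guarantees that $F_\beta$ has no zeros in $|z|\leq 1$. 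Using linearity of $B$ together with $B[1]=\lambda_0$ and $B[z^n]=\Lambda_n z^n$, a short computation then yields the parametrized inequality
\[
|H(z)-\beta m\lambda_0(1-\alpha)| \leq |K(z)-\bar\beta m\Lambda_n(R^n-\alpha)z^n|\qquad (|z|=1,\ |\beta|\leq 1).
\]
The sharpened pointwise bound carrying the $\delta m$ correction is extracted by optimizing $\beta$ over the closed unit disk, using the bound $|R^n-\alpha||\Lambda_n|\geq|1-\alpha||\lambda_0|$. This latter inequality follows from $|\Lambda_n|\geq|\lambda_0|$ (a consequence of applying \eqref{eq10} to $P\equiv 1$) together with the identity $|R^n-\alpha|^2-|1-\alpha|^2=(R^n-1)(R^n+1-2\Re\alpha)\geq 0$.

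For the $L_p$-lifting, I invoke Arestov's theorem for the operator family $\Phi_\gamma F := \{B[F\circ\sigma]-\alpha B[F]\}+\gamma\{B[F^{*}\circ\sigma]-\alpha B[F^{*}]\}$ with $|\gamma|\geq 1$. The base pointwise estimate ensures that $\Phi_\gamma$ lies in Arestov's admissible class, giving, for every $\phi$ in that class (in particular $\phi(x)=x^p$ for $0\leq p<\infty$),
\[
\int_0^{2\pi}\phi\bigl(|H(e^{i\theta})+\gamma K(e^{i\theta})|\bigr)d\theta \leq \int_0^{2\pi}\phi\bigl(|(R^n-\alpha)\Lambda_n e^{in\theta}+\gamma(1-\alpha)\lambda_0|\cdot|P(e^{i\theta})|\bigr)d\theta.
\]
Averaging over $\gamma$ on the unit circle converts this into the quotient $\|(R^n-\alpha)\Lambda_n z+(1-\alpha)\lambda_0\|_p/\|1+z\|_p$ on the right, while the additive $\delta\cdot cm/2$ term inside the left-hand $L_p$-norm encodes the gap produced by the strengthened pointwise estimate.

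The principal obstacle is precisely the pointwise base inequality $|H(z)|\leq|K(z)|$, for which the appealing but false identity $|B[P^{*}\circ\sigma]|=|B[(P\circ\sigma)^{*}]|$ of \cite{wl} must be abandoned. Instead, one must argue directly from the defining constraint of $\mathcal{B}_n$ --- namely, that the zeros of $U$ lie in $|z|\leq|z-n/2|$ --- most likely through a continuity/limit argument or a careful polynomial manipulation that respects this structural hypothesis. Once this cornerstone is in place, the Rouch\'e perturbation and the Arestov lift proceed routinely and cover the full range $0\leq p<\infty$; the extremal case $P(z)=az^n+b$ with $|a|=|b|=1$ (for which $m=0$) will verify sharpness, consistent with the corresponding extremum in Corollary \ref{c1}.
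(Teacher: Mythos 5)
Your overall architecture matches the paper's (pointwise comparison with $P^{\star}$, a Rouch\'e perturbation, Arestov's theorem, and averaging over a phase), but the proposal has a genuine gap at exactly the point you yourself flag as ``the principal obstacle'': the base inequality $|B[P\circ\sigma](z)-\alpha B[P](z)|\leq |B[P^{\star}\circ\sigma](z)-\alpha B[P^{\star}](z)|$ on $|z|\geq 1$ is never proved. You only say it should follow ``most likely through a continuity/limit argument or a careful polynomial manipulation,'' which is not a proof, and every subsequent step of your argument rests on it. The paper establishes this as Lemma \ref{l3} by a concrete chain you do not reproduce: for $|\lambda|>1$ the polynomial $f=P-\lambda P^{\star}$ has all its zeros in $|z|\leq 1$; the growth estimate $|f(Rz)|\geq \left(\frac{R+1}{2}\right)^{n}|f(z)|$ on $|z|=1$ (Lemma \ref{l2}) together with Rouch\'e's theorem places all zeros of $f(Rz)-\alpha f(z)$ in $|z|<1$; and the crucial fact that $B$ preserves the property of having all zeros in $|z|\leq 1$ (Lemma \ref{l1}, from Marden's Corollary 18.3 --- this is precisely where the hypothesis on the zeros of $U(z)$ in \eqref{uz} enters) transfers this to $B[f\circ\sigma]-\alpha B[f]$; a suitable choice of $\lambda$ at a putative exceptional point then yields a contradiction. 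You never invoke the zero-preservation property of $B$, which is the actual mechanism, so the cornerstone is missing rather than merely deferred.

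A secondary but real defect concerns your Arestov step. Arestov's theorem applies to admissible operators of the form $\psi_{\delta}$, i.e.\ coefficient multipliers $a_{j}\mapsto\delta_{j}a_{j}$. Your operator $\Phi_{\gamma}F=\{B[F\circ\sigma]-\alpha B[F]\}+\gamma\{B[F^{*}\circ\sigma]-\alpha B[F^{*}]\}$ is not of this form: the map $F\mapsto B[F^{*}\circ\sigma]-\alpha B[F^{*}]$ conjugates the coefficients of $F$ and reverses the powers, so $\Phi_{\gamma}$ is not linear in the coefficients $a_{j}$ and Lemma \ref{l4} does not apply to it as written. The paper avoids this by working instead with $B[P^{\star}\circ\sigma]^{\star}(z)-\bar{\alpha}B[P^{\star}]^{\star}(z)$, i.e.\ taking the reciprocal a second time, which restores the multiplier form while leaving the modulus unchanged on $|z|=1$; this is why Lemma \ref{l5} is stated with the extra star. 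Finally, note that the statement you are proving is, in the paper, an immediate consequence of Theorem \ref{t2} with $\delta=0$; your attempt to rebuild the full machinery is legitimate, but as it stands both the pointwise cornerstone and the admissibility verification need to be supplied before the argument is complete.
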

If we take $\alpha=0$ in \eqref{ce1}, we get the following result which is the generalization of Theorem \ref{tB} for $p\geq 1$ and also extends it for $0\leq p<1.$

\begin{corollary}\label{c2}
If $P\in \mathscr{P}_n$ and $P(z)$ does not vanish for $|z|<1,$ then for $0\leq p<\infty$ and $R> 1,$
\begin{equation}\label{ce2}
\left\|B[P\circ\sigma](z)\right\|_p\leq \dfrac{\left\|R^n\Lambda_nz+\lambda_0\right\|_p}{\|1+z\|_p}\left\|P(z)\right\|_p,
\end{equation}
 $B\in \mathcal{B}_n,$ $\sigma(z)=Rz$ and $\Lambda_n$ is defined by \eqref{phi}. The result is sharp as shown by $P(z)=az^n+b,$ $|a|=|b|=1.$
\end{corollary}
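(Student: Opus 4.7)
The plan is to deduce Corollary \ref{c2} as the specialization $\alpha=0$ of Corollary \ref{c1}, which is itself the $\delta=0$ case of Theorem \ref{t2}. Since all the analytic work is absorbed into the parent theorem, the task at this level reduces to a bookkeeping substitution together with a verification that the claimed extremal polynomial attains equality; accordingly there is no substantive obstacle once Corollary \ref{c1} is in hand, and the main care required is simply in the rotation/periodicity manipulation needed for the sharpness check.

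To obtain \eqref{ce2}, I would set $\alpha=0$ in \eqref{ce1}; this is admissible since $|\alpha|=0\leq 1$. On the left-hand side, the term $\alpha B[P](e^{i\theta})$ vanishes, leaving the $L^p$-mean $\|B[P\circ\sigma](z)\|_p$. On the right-hand side, the numerator $\|(R^n-\alpha)\Lambda_n z + (1-\alpha)\lambda_0\|_p$ collapses to $\|R^n\Lambda_n z + \lambda_0\|_p$, while the denominator $\|1+z\|_p$ and the factor $\|P(z)\|_p$ are unchanged. Since Corollary \ref{c1} is valid throughout $0\leq p<\infty$ and $R>1$, so is the resulting inequality.

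For sharpness, I would take $P(z)=az^n+b$ with $|a|=|b|=1$; the zeros of $P$ lie on $|z|=1$, so $P$ does not vanish in $|z|<1$. A direct application of \eqref{BO} together with the identity \eqref{phi} for $\Lambda_n$ gives
\[
B[P\circ\sigma](z) = aR^n\Lambda_n z^n + b\lambda_0.
\]
Because $|z^n|=1$ on $|z|=1$ and $|a|=|b|=1$, the change of variable $\phi=n\theta+\gamma$ (for a suitable real constant $\gamma$) in the defining integral, combined with the rotational invariance of the $L^p$-mean, produces $\|B[P\circ\sigma](z)\|_p = \|R^n\Lambda_n z + \lambda_0\|_p$ and $\|P(z)\|_p = \|1+z\|_p$. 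Substituting these into the right-hand side of \eqref{ce2} then yields $\|R^n\Lambda_n z + \lambda_0\|_p$, matching the left-hand side exactly and confirming the extremal role of $P(z)=az^n+b$.
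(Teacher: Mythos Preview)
Your proposal is correct and follows exactly the paper's own derivation: the paper obtains Corollary~\ref{c2} simply by setting $\alpha=0$ in Corollary~\ref{c1} (itself the $\delta=0$ case of Theorem~\ref{t2}). Your added sharpness verification is accurate---the paper merely asserts it without carrying out the computation, so your periodicity/rotation argument for $P(z)=az^n+b$ is a welcome elaboration rather than a departure.
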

 By triangle inequality, the following result follows immediately from Corollary \ref{c2}.
 \begin{corollary}\label{c3}
If $P\in \mathscr{P}_n$ and $P(z)$ does not vanish for $|z|<1,$ then for $0\leq p<\infty$ and $R> 1,$
\begin{equation}\label{ce3}
\left\|B[P\circ\sigma](z)\right\|_p\leq \dfrac{R^n\left|\Lambda_n|+|\lambda_0\right|}{\|1+z\|_p}\left\|P(z)\right\|_p,
\end{equation}
 $B\in \mathcal{B}_n,$ $\sigma(z)=Rz$ and $\Lambda_n$ is defined by \eqref{phi}.
 \end{corollary}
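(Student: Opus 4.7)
The plan is to derive Corollary \ref{c3} directly from Corollary \ref{c2} by bounding the numerator $\|R^n\Lambda_n z+\lambda_0\|_p$ pointwise on the unit circle and then integrating, as the paper itself suggests. Since Corollary \ref{c2} is assumed proved, the only thing left to establish is the elementary estimate
\[
\left\|R^n\Lambda_n z+\lambda_0\right\|_p \;\leq\; R^n|\Lambda_n|+|\lambda_0|, \qquad 0\leq p<\infty.
\]

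First I would observe that by the triangle inequality for complex numbers, for every real $\theta$,
\[
\left|R^n\Lambda_n e^{i\theta}+\lambda_0\right| \;\leq\; R^n|\Lambda_n|+|\lambda_0|.
\]
For $0<p<\infty$, raising this pointwise inequality to the $p$th power and integrating against normalized Lebesgue measure on $[0,2\pi]$ yields
\[
\left\|R^n\Lambda_n z+\lambda_0\right\|_p^p \;=\; \frac{1}{2\pi}\int_0^{2\pi}\left|R^n\Lambda_n e^{i\theta}+\lambda_0\right|^p d\theta \;\leq\; \bigl(R^n|\Lambda_n|+|\lambda_0|\bigr)^p,
\]
and taking $p$th roots gives the desired estimate. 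For $p=0$, the monotonicity of the logarithm combined with the same pointwise bound gives
\[
\|R^n\Lambda_n z+\lambda_0\|_0 \;=\; \exp\!\left\{\tfrac{1}{2\pi}\int_0^{2\pi}\log\left|R^n\Lambda_n e^{i\theta}+\lambda_0\right|d\theta\right\} \;\leq\; R^n|\Lambda_n|+|\lambda_0|.
\]

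Substituting this bound into the right-hand side of inequality \eqref{ce2} of Corollary \ref{c2} immediately produces
\[
\left\|B[P\circ\sigma](z)\right\|_p \;\leq\; \frac{\|R^n\Lambda_n z+\lambda_0\|_p}{\|1+z\|_p}\,\|P(z)\|_p \;\leq\; \frac{R^n|\Lambda_n|+|\lambda_0|}{\|1+z\|_p}\,\|P(z)\|_p,
\]
which is exactly \eqref{ce3}. There is really no technical obstacle here: the entire argument is the pointwise triangle inequality $|a+b|\le|a|+|b|$ promoted to an $L^p$-norm bound via monotonicity of $t\mapsto t^p$ (and of $\log$ in the $p=0$ case). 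The only point one might flag is that the triangle inequality for $\|\cdot\|_p$ itself fails when $0<p<1$, so the argument is deliberately run at the level of the integrand rather than at the level of the norm.
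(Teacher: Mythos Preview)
Your proof is correct and is exactly the approach the paper takes: the paper states only that Corollary~\ref{c3} ``follows immediately from Corollary~\ref{c2}'' by the triangle inequality, and you have spelled out precisely that step, including the care needed for $0<p<1$ (working pointwise rather than invoking Minkowski). Nothing is missing.
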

 \begin{remark}
 \textnormal{Corollary \ref{c3} establishes a correct proof of a result due to Shah and Liman \cite[Theorem 3]{wl} for $p\geq 1$ and also extends it for $0\leq p<1$ as well.}
 \end{remark}
\begin{remark}
 \textnormal{If we choose $\lambda_{0}=0=\lambda_{2}$ in \eqref{ce2}, we get for $ R > 1$ and $0\leq p <\infty,$
\begin{equation*}
\left\|P^{\prime}(Rz)\right\|_p\leq \frac{nR^{n-1}}{\left\|1+z\right\|_{p}}\left\|P(z)\right\|_p, 
\end{equation*}
which, in particular, yields inequality \eqref{eq7}. Next if we take $\lambda_{1}=0=\lambda_{2}$ in \eqref{ce2}, we get inequality \eqref{eq8}. Inequality \eqref{eq10} can be obtained  from corollary \ref{c2} by letting $p\rightarrow \infty$ in \eqref{t2e2}.}\\   
\end{remark}
Taking $\alpha=0$ in \eqref{t2e2}, we get the following result.
\begin{corollary}\label{cc}
If $P\in \mathscr{P}_n$ and $P(z)$ does not vanish for $|z|<1,$ then for $\delta\in \mathbb{C}$ with $|\delta|\leq 1,$ $0\leq p<\infty$ and $R>1,$
\begin{align}\label{cce}
\Big\|B[P\circ\sigma](z)+\delta\Big\{\dfrac{(R^n|\Lambda_n|-|\lambda_0|)m}{2}\Big\}\Big\|_p\leq \dfrac{\left\|R^n\Lambda_nz+\lambda_0\right\|_p}{\|1+z\|_p}\left\|P(z)\right\|_p,
\end{align}
where
$m=Min_{|z|=1}|P(z)|,$ $B\in \mathcal{B}_n,$ $\sigma(z)=Rz$ and $\Lambda_n$ is defined by \eqref{phi}. The result is best possible and equality in \eqref{cce} holds for $P(z)=az^n+b,$ $|a|=|b|=1.$
\end{corollary}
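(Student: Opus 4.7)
The plan is straightforward: Corollary \ref{cc} is precisely the specialization $\alpha=0$ of Theorem \ref{t2}, so the main task is to substitute $\alpha=0$ into \eqref{t2e2} and verify that all pieces collapse correctly. Setting $\alpha=0$ gives $|R^n-\alpha|=R^n$ and $|1-\alpha|=1$, so the correction term on the left becomes $\delta\{(R^n|\Lambda_n|-|\lambda_0|)m/2\}$ and the numerator on the right becomes $\|R^n\Lambda_n z+\lambda_0\|_p$, matching \eqref{cce} exactly. Since Theorem \ref{t2} is already stated, no additional work is needed beyond this substitution.

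For the sharpness claim, I would exhibit the extremal polynomial $P(z)=az^n+b$ with $|a|=|b|=1$, which does not vanish in $|z|<1$. For this $P$ we have $m=\min_{|z|=1}|P(z)|=0$ when $|a|=|b|=1$ only if $a/b$ can be reached by $-z^n$; actually for $|a|=|b|=1$, $m=0$, so the correction term on the left of \eqref{cce} vanishes. Then a direct computation of $B[P\circ\sigma](z)=R^n\Lambda_n a z^n+\lambda_0 b$ and of both $L_p$ norms shows equality in \eqref{cce}, matching the known extremal behavior already noted in the statement of Theorem \ref{t2}.

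The only step with any subtlety is the treatment of the case $p=0$ and the interpretation of $\|\cdot\|_0$ as a geometric mean, but since this is already subsumed in the hypotheses of Theorem \ref{t2}, no separate argument is needed here. There is no genuine obstacle: the corollary is purely a notational simplification of the theorem once $\alpha=0$ is plugged in, and the sharpness is inherited from the extremal example in Theorem \ref{t2}. Hence the proof reduces to a one-line invocation of Theorem \ref{t2} followed by the verification on $P(z)=az^n+b$.
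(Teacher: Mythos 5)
Your proposal is correct and takes exactly the same route as the paper, which obtains Corollary \ref{cc} simply by setting $\alpha=0$ in inequality \eqref{t2e2} of Theorem \ref{t2}, so that $|R^n-\alpha|=R^n$ and $|1-\alpha|=1$. Your sharpness check with $P(z)=az^n+b$, $|a|=|b|=1$ (for which $m=0$ and $B[P\circ\sigma](z)=aR^n\Lambda_nz^n+\lambda_0 b$) is the same extremal example the paper cites.
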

The following corollary immediately follows from Theorem \ref{t2} by letting $p\rightarrow\infty$ in \eqref{t2e2} and choosing the argument of $\delta$ suitably with $|\delta|=1.$ 
\begin{corollary}\label{c4}
If $P\in \mathscr{P}_n$ and $P(z)$ does not vanish for $|z|<1,$ then for $\alpha\in\mathbb{C}$ with $|\alpha|\leq 1,$ $R>1,$
\begin{align} \label{ce4}\nonumber
\Big\|B[P\circ\sigma](z)-\alpha B[P](z)\Big\|_{\infty}\leq& \dfrac{\left|R^n-\alpha\right||\Lambda_n|+\left|(1-\alpha)\lambda_0\right|}{2}\left\|P(z)\right\|_{\infty}\\&-\dfrac{\left|R^n-\alpha\right||\Lambda_n|-\left|(1-\alpha)\lambda_0\right|}{2}m,
\end{align}
 where $m=Min_{|z|=1}|P(z)|,$ $B\in \mathcal{B}_n,$ $\sigma(z)=Rz$ and $\Lambda_n$ is defined by \eqref{phi}.
\end{corollary}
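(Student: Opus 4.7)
The plan is to apply Theorem~\ref{t2}, pass to the limit $p\to\infty$ in \eqref{t2e2}, and then choose the argument of $\delta$ (with $|\delta|=1$) so that the $\delta$-term aligns with the value of $Q(z):=B[P\circ\sigma](z)-\alpha B[P](z)$ at the point where $|Q|$ attains its sup-norm on the unit circle. Since every $L^p$-norm appearing on the right of \eqref{t2e2} is that of a fixed trigonometric polynomial in $\theta$, the standard fact $\|f\|_p\to\|f\|_\infty$ as $p\to\infty$ yields $\|1+z\|_p\to 2$, $\|P(z)\|_p\to\|P(z)\|_\infty$, and $\|(R^n-\alpha)\Lambda_n z+(1-\alpha)\lambda_0\|_p\to|R^n-\alpha||\Lambda_n|+|(1-\alpha)\lambda_0|$. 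The right side of \eqref{t2e2} therefore converges to
\[
M:=\frac{|R^n-\alpha||\Lambda_n|+|(1-\alpha)\lambda_0|}{2}\,\|P(z)\|_\infty.
\]

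For the left side, set
\[
c:=\frac{(|R^n-\alpha||\Lambda_n|-|1-\alpha||\lambda_0|)\,m}{2}\in\mathbb{R},
\]
so that the integrand of the $L^p$-norm is $Q(e^{i\theta})+\delta c$. By continuity of $|Q|$ on the compact set $\{|z|=1\}$, pick $\theta_0$ with $|Q(e^{i\theta_0})|=\|Q\|_\infty$ and choose $\delta$ of modulus $1$ so that $\delta c$ carries the same argument as $Q(e^{i\theta_0})$ (any $\delta$ works if $c=0$). For this choice,
\[
\|Q+\delta c\|_\infty\ge |Q(e^{i\theta_0})+\delta c|=\|Q\|_\infty+|c|,
\]
and the left side of \eqref{t2e2} converges to $\|Q+\delta c\|_\infty$ as $p\to\infty$.

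Combining the two limits produces $\|Q\|_\infty+|c|\le M$, and since $|c|\ge c$ this gives $\|Q\|_\infty\le M-c$, which is exactly the claimed inequality \eqref{ce4}. The only genuine step is the optimal phase alignment in the choice of $\arg\delta$; everything else is a routine $L^p\to L^\infty$ passage applied to Theorem~\ref{t2}, so no real obstacle arises—Corollary~\ref{c4} is a direct sup-norm consequence of the $L^p$ inequality \eqref{t2e2}.
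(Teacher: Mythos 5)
Your proposal is correct and follows exactly the route the paper indicates for Corollary~\ref{c4}: let $p\to\infty$ in \eqref{t2e2} and choose $\arg\delta$ with $|\delta|=1$ so that the constant term aligns with $Q$ at a point where $|Q|$ attains its maximum on $|z|=1$. Your write-up merely fills in the routine details (the limits of the three $L^p$ norms and the inequality $|c|\geq c$) that the paper leaves implicit.
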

\begin{center}
\section{\bf Lemma}
\end{center}
For the proofs of this theorem, we need the following lemmas. The first lemma follows from Corollary $18.3$ of \cite[p. 86]{mm}.
\begin{lemma}\label{l1}
If $ P\in\mathscr{P}_n $ and $P(z)$ has all zeros in $|z|\leq 1,$  then all the zeros of $B[P](z)$ also lie in $|z|\leq 1.$ 
\end{lemma}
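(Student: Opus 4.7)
The plan is to realise $B[P]$ as the Szeg\H{o} (Schur--Hadamard) convolution of $P$ with a suitable auxiliary polynomial $Q$ of degree $n$, and then invoke the classical composition theorem for two polynomials whose zeros both lie in the closed unit disk. Writing $P(z)=\sum_{j=0}^{n}a_j z^j$, formula \eqref{BO} gives
\[
B[P](z)=\sum_{j=0}^{n}\phi_j\, a_j z^j,\qquad \phi_j:=\lambda_0+\lambda_1\frac{nj}{2}+\lambda_2\frac{n^2 j(j-1)}{8},
\]
so, setting $Q(z):=\sum_{k=0}^{n}\binom{n}{k}\phi_k z^k$, a one-line check confirms $B[P]=P\odot Q$, where $\odot$ denotes the Szeg\H{o} convolution $\sum\binom{n}{k}c_k z^k\odot\sum\binom{n}{k}d_k z^k:=\sum\binom{n}{k}c_k d_k z^k$.

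The next step is to locate the zeros of $Q$. Using the elementary identities $\sum_k\binom{n}{k}k z^k=nz(1+z)^{n-1}$ and $\sum_k\binom{n}{k}k(k-1)z^k=n(n-1)z^2(1+z)^{n-2}$, one factors $Q(z)=(1+z)^{n-2}W(z)$ with $W$ a quadratic polynomial, and a direct substitution then yields the key functional identity
\[
\frac{W(z)}{(1+z)^2}=U\!\left(\frac{nz}{2(1+z)}\right),
\]
with $U$ as in \eqref{uz}. Since the M\"obius map $z\mapsto \zeta=\frac{nz}{2(1+z)}$ is a bijection of the closed disk $|z|\le 1$ onto the closed half-plane $|\zeta|\le|\zeta-n/2|$ (a routine calculation: $|z|\le 1\iff|2\zeta|\le|n-2\zeta|$), the hypothesis on $U$ built into the definition of $\mathcal{B}_n$ forces every zero of $W$, and hence of $Q$, to lie in $|z|\le 1$.

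With both $P$ and $Q$ now known to have all zeros in the closed unit disk, Szeg\H{o}'s composition theorem, which is the content of Corollary~$18.3$ of \cite{mm}, guarantees that every zero $\gamma$ of $P\odot Q=B[P]$ is of the form $\gamma=-\alpha\beta$ with $\alpha$ a zero of $P$ and $\beta$ a zero of $Q$; consequently $|\gamma|\le 1$. The main obstacle along this route is purely bookkeeping: one has to check carefully that the scalars $\phi_j$ appearing in $B[P]$, the binomial factors $\binom{n}{j}$ built into the Szeg\H{o} pairing, and the weights $C(n,r)$ in \eqref{uz} all conspire so that the above functional identity is exact; once it is, the half-plane condition on $U$ transfers through the chosen M\"obius map into the unit-disk condition on $Q$, and the rest is a direct invocation of a classical composition theorem. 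A minor technical footnote is the case $\deg P<n$ (or $\lambda_2=0$), which is handled by a standard continuity/limiting argument.
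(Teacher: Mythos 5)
Your proposal is correct in substance and does genuinely more than the paper, which offers no proof of this lemma at all: it is disposed of with the single sentence that it ``follows from Corollary $18.3$ of \cite{mm}'', and that corollary is essentially the lemma itself restated. Your derivation --- writing $B[P]=P\odot Q$ with $Q(z)=\sum_k\binom{n}{k}\phi_kz^k$, factoring $Q(z)=(1+z)^{n-2}W(z)$ via $\sum_k\binom{n}{k}kz^k=nz(1+z)^{n-1}$ and $\sum_k\binom{n}{k}k(k-1)z^k=n(n-1)z^2(1+z)^{n-2}$, and verifying $W(z)=(1+z)^2\,U\bigl(nz/(2(1+z))\bigr)$ --- is exactly how Marden's Corollary $18.3$ is obtained from the composition theorem, and every computation checks: the inverse M\"obius map is $z=2\zeta/(n-2\zeta)$, so $|z|\le1$ iff $|\zeta|\le|\zeta-n/2|$, and the half-plane condition on $U$ transfers to the zeros of $Q$. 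Note that $Q$ automatically has exact degree $n$: the leading coefficient of $W$ is $\Lambda_n=U(n/2)$, and $n/2$ lies outside the half-plane containing the zeros of $U$, so $U(n/2)\neq0$; in particular the case $\lambda_2=0$ needs no limiting argument. Two attribution/precision slips: Szeg\H{o}'s composition theorem is Theorem $16.1$ of \cite{mm}, not Corollary $18.3$ (citing the latter for it makes your argument circular as literally worded), and Szeg\H{o}'s theorem gives $\gamma=-\alpha\beta$ with $\alpha$ merely a point of the circular region containing the zeros of $P$, not necessarily a zero of $P$ --- though this still yields $|\gamma|\le1$.

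The one genuine misstep is the closing footnote claiming that the case $\deg P<n$ ``is handled by a standard continuity/limiting argument.'' It cannot be, because the lemma is false for polynomials of degree less than $n$ if read literally. Take $n=2$, $\lambda_0=1$, $\lambda_1=-1.1$, $\lambda_2=0$: then $U(z)=1-2.2z$ has its zero at $5/11$, which satisfies $|z|\le|z-1|$, so $B\in\mathcal{B}_2$; yet for $P(z)=z-1$ one finds $B[P](z)=-0.1z-1$, whose zero is $-10$. No limiting argument is available, since a quadratic $\epsilon z^2+z-1$ has product of zeros $-1/\epsilon$, so for small $\epsilon$ it cannot have both zeros in $|z|\le1$; thus $z-1$ is not a limit of admissible degree-$2$ polynomials. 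The correct repair is the standard Grace--Szeg\H{o} convention: an element of $\mathscr{P}_n$ of degree $m<n$ carries $n-m$ zeros at infinity, which violate the hypothesis, so the lemma implicitly requires $\deg P=n$ (which is how it is applied in the paper). You should state that restriction explicitly rather than appeal to continuity.
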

 \begin{lemma}\label{l2}
   If $P\in\mathscr{P}_n $ and $ P(z) $ have  all its zeros in $\left|z\right|\leq 1$ then for every $R>1,$  and $\left|z\right|=1$,
  \begin{equation*}
  \left|P(Rz)\right|\geq \left(\frac{R+1}{2}\right)^{n}\left|P(z)\right|.
  \end{equation*}
  \end{lemma}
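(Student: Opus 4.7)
The plan is to reduce the inequality to a pointwise Möbius-type bound via a factorization of $P$, then verify that bound by a half-plane computation.

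First I would factor $P$ into linear factors: since all zeros of $P$ lie in $|z|\le 1$, write $P(z)=c\prod_{j=1}^n(z-z_j)$ with $|z_j|\le 1$. Then for $|z|=1$,
\[
\frac{|P(Rz)|}{|P(z)|}=\prod_{j=1}^n \frac{|Rz-z_j|}{|z-z_j|},
\]
so it suffices to prove the single-factor estimate
\[
\frac{|Rz-\zeta|}{|z-\zeta|}\ge \frac{R+1}{2}\qquad (|z|=1,\ |\zeta|\le 1,\ R>1).
\]
(The potentially degenerate case $z=z_j$, in which both sides of the lemma vanish, is handled trivially.)

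Next I would reduce this pointwise inequality to a half-plane statement. Setting $w=\zeta/z$ and using $|z|=1$ gives
\[
\frac{|Rz-\zeta|}{|z-\zeta|}=\frac{|R-w|}{|1-w|}=\left|1+\frac{R-1}{1-w}\right|,\qquad |w|\le 1.
\]
A direct computation with $w=u+iv$ shows $\operatorname{Re}\bigl(1/(1-w)\bigr)=(1-u)/|1-w|^{2}\ge 1/2$, since $2(1-u)\ge (1-u)^{2}+v^{2}$ is equivalent to $u^{2}+v^{2}\le 1$. Consequently
\[
\operatorname{Re}\!\left(1+\frac{R-1}{1-w}\right)\ge 1+\frac{R-1}{2}=\frac{R+1}{2},
\]
and in particular the modulus is at least $(R+1)/2$, which is the required bound.

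The main (and essentially only) obstacle is this half-plane estimate, which once isolated is a quick Möbius computation; alternatively one can verify the sharp case $w=-1$ on the boundary $|w|=1$ and then invoke the minimum modulus principle for $(R-w)/(1-w)$, which is nonvanishing on $\overline{|w|\le 1}$. Multiplying the single-factor bound over $j=1,\dots,n$ yields $|P(Rz)|\ge((R+1)/2)^{n}|P(z)|$ for $|z|=1$, as claimed.
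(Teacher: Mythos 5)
Your proof is correct and follows essentially the same route as the paper's: factor $P$ into linear factors over its zeros in $|z|\le 1$ and establish the single-factor bound $|Rz-\zeta|/|z-\zeta|\ge (R+1)/2$ pointwise on $|z|=1$. The only (cosmetic) difference is that you verify that single-factor estimate via the M\"obius identity $\bigl(R-w\bigr)/\bigl(1-w\bigr)=1+(R-1)/(1-w)$ and the half-plane bound $\operatorname{Re}\bigl(1/(1-w)\bigr)\ge 1/2$, whereas the paper writes $\zeta=r e^{i\theta_j}$ and checks the same inequality by the elementary estimate $\bigl\{(R^2+r^2-2Rr\cos\phi)/(1+r^2-2r\cos\phi)\bigr\}^{1/2}\ge (R+r)/(1+r)\ge (R+1)/2$.
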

  \begin{proof}
  Since all the zeros of $P(z)$ lie in $\left|z\right|\leq 1$,  we write
  \[P(z)=C\prod_{j=1}^{n}\left(z-r_{j}e^{i\theta_{j}}\right),\]
   where $r_{j}\leq 1$. Now for $0\leq \theta <2\pi$, $R >  1$, we have
   \begin{align*}
  \left|\frac{Re^{i\theta}-r_{j}e^{i\theta_{j}}}{e^{i\theta}-r_{j}e^{i\theta_{j}}}\right|&= \left\{\frac{R^{2}+r_{j}^{2}-2Rr_{j}\cos(\theta-\theta_{j})}{1+r_{j}^{2}-2r_{j}\cos(\theta-\theta_{j})}\right\}^{1/2},\\
   &\geq \left\{\frac{R+r_{j}}{1+r_{j}}\right\},\\
   &\geq \left\{\frac{R+1}{2}\right\}, \textrm{for}\,\,\,j = 1,2,\cdots,n.
  \end{align*}
  Hence
  \begin{align*}
  \left|\frac{P(Re^{i\theta})}{P(e^{i\theta})}\right|&=\prod_{j=1}^{n}\left|\frac{Re^{i\theta}-r_{j}e^{i\theta_{j}}}{e^{i\theta}-r_{j}e^{i\theta_{j}}}\right|,\\
  &\geq \prod_{j=1}^{n}\left(\frac{R+1}{2}\right),\\
  &=\left(\frac{R+1}{2}\right)^{n},
  \end{align*}
  for $0\leq \theta <2\pi$. This implies for $|z|=1$ and $R>1$,
  $$ \left| P(Rz)\right|\geq \left(\frac{R+1}{2}\right)^{n}\left|P(z)\right|,$$
  which completes the proof of Lemma \ref{l2}.
\end{proof}
\begin{lemma}\label{l2'}
If $P\in\mathscr P_n$ and $P(z)$ has all its zeros in $|z|\leq 1,$ then for every real or complex number $\alpha$ with $|\alpha|\leq 1,$ $R>1$ and $|z|\geq 1,$
\begin{align}\label{le2'}
|B[P\circ\sigma](z)-\alpha B[P](z)|\geq |R^n-\alpha ||\Lambda_n| |z|^n m,
\end{align}
where $m=\underset{|z|=1}{Min}|P(z)|,$ $\sigma(z)=Rz$ and $\Lambda_n$ is given by \eqref{phi}.
\end{lemma}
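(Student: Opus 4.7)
The plan is to use Rouch\'e's theorem together with Lemma \ref{l1} to produce a one-parameter family of polynomials whose zeros all lie in $|z|\leq 1$, and then to extract the desired bound on $H(z):=B[P\circ\sigma](z)-\alpha B[P](z)$ via a ``non-vanishing disk'' argument. If $m=0$ the inequality is trivial, so assume $m>0$; then $P$ has no zeros on $|z|=1$ and hence all $n$ of its zeros lie in $|z|<1$.

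The crucial choice is the auxiliary polynomial $F_\mu(z):=P(z)-\mu m z^n$ for $|\mu|<1$. A naive ``shift by a constant'' $P(z)-\mu m$ would only yield a weaker estimate involving $|\lambda_0|$; shifting by $\mu m z^n$ is what ultimately produces the factor $|\Lambda_n||z|^n$, because $B[z^n](z)=\Lambda_n z^n$. On $|z|=1$ we have $|\mu m z^n|=|\mu|m<m\leq|P(z)|$, so Rouch\'e gives that $F_\mu$ has the same number of zeros as $P$ inside $|z|<1$, namely $n$; thus all zeros of $F_\mu$ lie in $|z|<1$.

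Applying Lemma \ref{l2} to $F_\mu$ gives, for $|z|=1$ and $R>1$,
\[|F_\mu(Rz)|\geq\left(\frac{R+1}{2}\right)^n|F_\mu(z)|>|F_\mu(z)|\geq|\alpha F_\mu(z)|,\]
and a second Rouch\'e argument then shows that $G_\mu(z):=F_\mu(Rz)-\alpha F_\mu(z)$ has all its $n$ zeros in $|z|<1$. Lemma \ref{l1} then gives that $B[G_\mu]$ has all its zeros in $|z|\leq 1$, and a direct expansion using the linearity of $B$ together with $B[z^n](z)=\Lambda_n z^n$ yields
\[B[G_\mu](z)=H(z)-\mu m(R^n-\alpha)\Lambda_n z^n.\]

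To finish, fix any $z_0$ with $|z_0|>1$: for every $|\mu|<1$ we have $B[G_\mu](z_0)\neq 0$, so $H(z_0)\neq \mu m(R^n-\alpha)\Lambda_n z_0^n$. If $|H(z_0)|<m|R^n-\alpha||\Lambda_n||z_0|^n$, then $\mu_0:=H(z_0)/(m(R^n-\alpha)\Lambda_n z_0^n)$ would satisfy $|\mu_0|<1$ and produce $B[G_{\mu_0}](z_0)=0$, a contradiction. Hence $|H(z_0)|\geq m|R^n-\alpha||\Lambda_n||z_0|^n$, and continuity extends this to $|z|\geq 1$. The main obstacle is pinpointing the correct ``shift direction'' $z^n$ so that $B$ couples with $\Lambda_n$; a secondary technical point is checking that $G_\mu$ really has degree $n$, which rests on the standard fact $|a_n|\geq m$ for a polynomial with all zeros in the closed unit disk (obtained by applying the minimum modulus principle to the reciprocal polynomial $z^n\overline{P(1/\bar z)}$).
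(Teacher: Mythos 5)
Your proof is correct and follows essentially the same route as the paper: perturb $P$ by $\mu m z^n$, apply Lemma \ref{l2} and Rouch\'e twice to place all zeros of $F_\mu(Rz)-\alpha F_\mu(z)$ in $|z|<1$, invoke Lemma \ref{l1}, and conclude by choosing $\mu$ to force a zero at a hypothetical bad point. Your handling of the boundary case (working at $|z_0|>1$ and extending to $|z|\geq 1$ by continuity) is in fact slightly more careful than the paper's, since Lemma \ref{l1} only guarantees zeros in the closed disk.
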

\begin{proof}
By hypothesis, all the zeros of $P(z)$ lie in $|z|\leq 1$ and 
$$  m |z|^n\leq |P(z)|\,\,\,\,\textnormal{for}\,\,\,\,|z|=1. $$ 
We first show that the polynomial $g(z)=P(z)-\beta mz^n$ has all its zeros in $|z|\leq 1$ for every real or complex number $\beta$ with $|\beta|<1.$ This is obvious if $m=0,$ that is if $P(z)$ has a zero on $|z|=1.$ Henceforth, we assume $P(z)$ has all its zeros in $|z|<1,$ then $m>0$ and it follows by Rouche's theorem that the polynomial $g(z) $ has all its zeros in $|z|<1$ for every real or complex number $\beta$ with $|\beta|<1.$ Applying Lemma \ref{l2} to the polynomial $g(z),$ we deduce
\begin{align*}
|g(Rz)|\geq \left(\dfrac{R+1}{2}\right)^n|g(z)|\,\,\,\,\textnormal{for}\,\,\,\,|z|=1\,\,\,\,R>1.
\end{align*}   
Since $R>1,$ therefore $\frac{R+1}{2}>1,$ this gives
\begin{align}\label{p1}
|g(Rz)|>|g(z)|\,\,\,\,\textnormal{for}\,\,\,\,|z|=1\,\,\,\,R>1.
\end{align}
Since all the zeros of $G(Rz)$ lie in $|z|<1/R<1,$ by Rouche's theorem again it follows from \eqref{p1} that all the zeros of polynomial 
\begin{align*}
H(z)=g(Rz)-\alpha g(z)=P(Rz)-\alpha P(z)-\beta(R^n-\alpha)z^n m
\end{align*}
lie in $|z|<1,$ for every $\alpha,\beta$ with $|\alpha|\leq 1,|\beta|<1$ and $R>1.$ Applying Lemma \ref{l1} to $H(z)$ and noting that $B$ is a linear operator, it follows that all the zeros of polynomial 
\begin{align}\label{p1'}\nonumber
B[H](z)&=B[g\circ\sigma](z)-\alpha B[g](z)\\&=\left\{B[P\circ\sigma](z)-\alpha B[P](z)    \right\}-\beta(R^n-\alpha)m B[z^n]
\end{align} 
lie in $|z|<1.$ This gives
\begin{align}\label{p2}
\left|B[P\circ\sigma](z)-\alpha B[P](z)\right|\geq |R^n-\alpha||\Lambda_n||z|^nm\,\,\,\,\,\textnormal{for}\,\,\,\,\,|z|\geq 1.
\end{align}
If \eqref{p2} is not true, then there is point $w$ with $|w|\geq 1$ such that 
\begin{align}
\left|B[P\circ\sigma](w)-\alpha B[P](w)\right|< |R^n-\alpha||\Lambda_n||w|^nm.
\end{align}
We choose
$$ \beta=\dfrac{B[P\circ\sigma](w)-\alpha B[P](w)}{(R^n-\alpha)\Lambda_nw^n m},   $$
then clearly $|\beta|<1$ and with this choice of $\beta,$ from \eqref{p1'}, we get $B[H](w)=0$ with $|w|\geq 1.$ This is clearly a contradiction to the fact that all the zeros of $H(z)$ lie in $|z|<1.$ Thus for every real or complex $\alpha$ with $|\alpha|\leq $  1,
$$\left|B[P\circ\sigma](z)-\alpha B[P](z)\right|\geq |R^n-\alpha||\Lambda_n||z|^nm$$
for $|z|\geq 1$ and $R>1.$
\end{proof} 
  \begin{lemma}\label{l3}
   If $ P\in\mathscr{P}_n $ and $ P(z) $ has no zero in $\left|z\right|<1,$ then for every $ \alpha\in\mathbb{C} $ with $ |\alpha|\leq 1,$ $ R>1 $ and $ |z|\geq 1 $,
  \begin{equation}\label{le3}
  \left|B[P\circ\sigma](z)-\alpha B[P](z)\right|\leq \left|B[P^{\star}\circ\sigma](z)-\alpha B[P^{\star}](z)\right|,
  \end{equation}
    where $P^{\star}(z)=z^{n}\overline{P(1/\overline{z})} $ and $\sigma(z)=Rz.$
    \end{lemma}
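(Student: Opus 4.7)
The plan is to mimic the structure of the proof of Lemma \ref{l2'}, only now producing an upper bound via a Rouch\'e-style contradiction argument. First I would reduce to the case where $P$ has no zero on the closed unit disk: replacing $P(z)$ by $P_{\epsilon}(z)=P((1-\epsilon)z)$ pushes every zero outward, so $P_{\epsilon}$ has no zero in $|z|\leq 1$; since both sides of \eqref{le3} depend continuously on the coefficients of $P$, it suffices to prove the inequality under this stronger hypothesis and then send $\epsilon\to 0^{+}$. Under this reduction $P^{\star}(z)=z^{n}\overline{P(1/\bar z)}$ is a polynomial of degree exactly $n$ with all zeros in $|z|<1$, and $|P(z)|=|P^{\star}(z)|$ on $|z|=1$.

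Next, for each $\mu\in\mathbb{C}$ with $|\mu|>1$ I would introduce the auxiliary polynomial $G_{\mu}(z):=\mu P^{\star}(z)-P(z)$. On $|z|=1$ we have $|\mu P^{\star}(z)|=|\mu||P(z)|>|P(z)|$, so Rouch\'e's theorem, together with the fact that $P^{\star}$ has all its zeros in $|z|<1$, forces $G_{\mu}$ to have all its $n$ zeros in $|z|<1$. Applying Lemma \ref{l2} to $G_{\mu}$ yields $|G_{\mu}(Rz)|>|G_{\mu}(z)|\geq|\alpha G_{\mu}(z)|$ on $|z|=1$ for every $|\alpha|\leq 1$ and $R>1$; since $G_{\mu}(Rz)$ has all its zeros inside $|z|<1/R$, a second application of Rouch\'e shows that $G_{\mu}(Rz)-\alpha G_{\mu}(z)$ has all $n$ zeros in $|z|<1$. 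Invoking Lemma \ref{l1}, the polynomial $B[G_{\mu}\circ\sigma](z)-\alpha B[G_{\mu}](z)$ has all its zeros in $|z|\leq 1$, and by linearity of $B$ this expression is exactly $\mu\,\Phi(z)-\Psi(z)$, where $\Phi(z):=B[P^{\star}\circ\sigma](z)-\alpha B[P^{\star}](z)$ and $\Psi(z):=B[P\circ\sigma](z)-\alpha B[P](z)$.

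The desired inequality $|\Psi(z)|\leq|\Phi(z)|$ on $|z|\geq 1$ then follows by the usual contradiction: if $|\Psi(z_{0})|>|\Phi(z_{0})|$ for some $z_{0}$ with $|z_{0}|>1$, setting $\mu:=\Psi(z_{0})/\Phi(z_{0})$ yields $|\mu|>1$ and $\mu\,\Phi(z_{0})-\Psi(z_{0})=0$ with $|z_{0}|>1$, contradicting the conclusion of the previous paragraph. The case $|z|=1$ is recovered by continuity, and undoing the perturbation by $\epsilon\to 0^{+}$ completes the proof.

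The delicate point, and the main obstacle, is the division by $\Phi(z_{0})$: one must ensure that $\Phi$ is non-vanishing on $|z|\geq 1$. This is precisely where the already-proved Lemma \ref{l2'} pays dividends: since $P^{\star}$ has all its zeros in $|z|<1$ and (after the perturbation) no zeros on $|z|=1$, Lemma \ref{l2'} applied to $P^{\star}$ gives $|\Phi(z)|\geq|R^{n}-\alpha||\Lambda_{n}||z|^{n}\min_{|w|=1}|P^{\star}(w)|>0$ for $|z|\geq 1$, so $\Phi$ is non-vanishing there, the division is legitimate, and the argument closes. Aside from this and the careful Rouch\'e bookkeeping on the boundary $|z|=1$, the proof is a straightforward transplant of the scheme used for Lemma \ref{l2'}.
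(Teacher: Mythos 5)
Your proof is correct and follows essentially the same route as the paper: your auxiliary polynomial $G_{\mu}=\mu P^{\star}-P$ is (up to sign) exactly the paper's $f=P-\lambda P^{\star}$, and the chain Lemma \ref{l2} $\rightarrow$ Rouch\'e $\rightarrow$ Lemma \ref{l1} $\rightarrow$ contradiction by a suitable choice of the parameter is identical. Your two refinements --- the $\epsilon$-perturbation to rule out zeros on $|z|=1$, and the use of Lemma \ref{l2'} (rather than a second appeal to Lemma \ref{l1}) to guarantee $\Phi(z_{0})\neq 0$ before dividing --- are welcome patches of points the paper glosses over, but they do not change the underlying argument.
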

   \begin{proof}
    Since the polynomial $P(z)$ has all its zeros in $|z|\geq 1,$ therefore, for every real or complex number $\lambda$ with $|\lambda|>1,$ the polynomial $f(z)=P(z)-\lambda P^{\star}(z),$ where $P^{\star}(z)=z^{n}\overline{P(1/\overline{z})},$ has all zeros in $|z|\leq 1.$ Applying Lemma \ref{l2} to the polynomial $f(z),$ we obtain for every $R> 1$ and $0\leq \theta<2\pi,$
   \begin{equation}\label{l3p1}
   |f(Re^{i\theta})|\geq \left(\dfrac{R+1}{2}\right)^n|f(e^{i\theta})|.
   \end{equation} 
   Since $f(Re^{i\theta})\neq 0$ for every $R>1,$ $0\leq \theta<2\pi$ and $R+1>2,$ it follows from \eqref{l3p1} that
   \begin{equation*}
   |f(Re^{i\theta})|> \left(\dfrac{R+1}{2}\right)^n|f(Re^{i\theta})|\geq |f(e^{i\theta})|,
   \end{equation*}
   for every $R>1$ and $0\leq \theta<2\pi.$ This gives
   \begin{equation*}
   |f(z)|<|f(Rz)|\,\,\,\textnormal{for}\,\,\,\,|z|=1,\,\,\,\,\textnormal{and}\,\,\,R> 1.
   \end{equation*}
   Using Rouche's theorem and noting that all the zeros of $f(Rz)$ lie in $|z|\leq 1/R<1,$ we conclude that the polynomial
   $$ T(z)=f(Rz)-\alpha f(z)=\left\{P(Rz)-\alpha P(z)\right\}-\lambda\left\{P^{\star}(Rz)-\alpha P^{\star}(z)\right\}  $$
   has all its zeros in $|z|<1$ for every real or complex $\alpha$ with $|\alpha|\geq 1$ and $R> 1.$ \\
   Applying Lemma \ref{l1} to polynomial $T(z)$ and noting that $B$ is a linear operator, it follows that all the zeros of polynomial
    \begin{align*}
B[T](z)&=B[f\circ\sigma](z)-\alpha B[f](z)\\&=\left\{B[P\circ\sigma](z)-\alpha B[P](z)\right\}-\lambda\left\{B[P^{\star}\circ\sigma](z)-\alpha B[P^{\star}](z)\right\} 
    \end{align*}
    lie in $|z|<1$ where $\sigma(z)=Rz.$ This implies
   \begin{equation}\label{l3p2}
   |B[P\circ\sigma](z)-\alpha B[P](z)|\leq |B[P^{\star}\circ\sigma](z)-\alpha B[P^{\star}](z)|
   \end{equation}
   for $|z|\geq 1$ and $R>1.$ If inequality \eqref{l3p2} is not true, then there exits a point $z=z_0$ with $|z_0|\geq 1$ such that
  \begin{equation}\label{l3p}
     |B[P\circ\sigma](z_0)-\alpha B[P](z_0)|> |B[P^{\star}\circ\sigma](z_0)-\alpha B[P^{\star}](z_0)|.
     \end{equation} 
     But all the zeros of $P^{\star}(Rz)$ lie in $|z|<1/R<1,$ therefore, it follows (as in case of $f(z)$) that all the zeros of $P^{\star}(Rz)-\alpha P^{\star}(z)$ lie in $|z|<1.$ Hence, by Lemma \ref{l1}, we have
     $$B[P^{\star}\circ\sigma](z_0)-\alpha B[P^{\star}](z_0)\neq 0 .$$
     We take
     $$ \lambda=\dfrac{B[P\circ\sigma](z_0)-\alpha B[P](z_0)}{B[P^{\star}\circ\sigma](z_0)-\alpha B[P^{\star}](z_0)} , $$
     then $\lambda$ is well defined real or complex number with $|\lambda|>1$ and with this choice of $\lambda,$ we obtain $B[T](z_0)=0$ where $|z_0|\geq 1.$ This contradicts the fact that all the zeros of $B[T](z)$ lie in $|z|<1.$ Thus \eqref{l3p2} holds true for $|\alpha|\leq 1$ and $R>1.$
   \end{proof}
   \begin{lemma}\label{l3'}
      If $ P\in\mathscr{P}_n $ and $ P(z) $ has no zero in $\left|z\right|<1,$ then for every $ \alpha\in\mathbb{C} $ with $ |\alpha|\leq 1,$ $ R>1 $ and $ |z|\geq 1 $,
     \begin{align}\label{le3'}\nonumber
     \big|B[P\circ\sigma]&(z)-\alpha B[P](z)\big|\\\leq &\left|B[P^{\star}\circ\sigma](z)-\alpha B[P^{\star}](z)\right|-(|R^n-\alpha||\Lambda_n|-|1-\alpha||\lambda_0|)m,
     \end{align}
       where $P^{\star}(z)=z^{n}\overline{P(1/\overline{z})} ,$ $m=\underset{|z|=1}{Min}|P(z)|$ and $\sigma(z)=Rz.$
       \end{lemma}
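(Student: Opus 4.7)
The plan is to reduce \eqref{le3'} to Lemma~\ref{l3} by a Rouch\'e-type perturbation that surfaces the factor $m=\min_{|z|=1}|P(z)|$. For $\gamma\in\mathbb{C}$ with $|\gamma|<1$ I set $F(z):=P(z)-\gamma m$. Since $P$ has no zero in $|z|<1$ and $|P(z)|\ge m>|\gamma m|$ on $|z|=1$ (the case $m=0$ being trivial), Rouch\'e's theorem ensures that $F$ also has no zero in $|z|<1$, so Lemma~\ref{l3} applies to $F$ and gives
\[
\left|B[F\circ\sigma](z)-\alpha B[F](z)\right|\le\left|B[F^{\star}\circ\sigma](z)-\alpha B[F^{\star}](z)\right|\qquad(|z|\ge 1).
\]

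Next I would expand both sides using the linearity of $B$ together with the elementary identities $B[c]=\lambda_{0}c$ for constants $c$ and $B[z^{n}]=\Lambda_{n}z^{n}$, noting that $F^{\star}(z)=P^{\star}(z)-\bar\gamma m z^{n}$. Writing $U:=B[P\circ\sigma](z)-\alpha B[P](z)$ and $V:=B[P^{\star}\circ\sigma](z)-\alpha B[P^{\star}](z)$, the displayed inequality becomes
\[
\left|U-(1-\alpha)\gamma m\lambda_{0}\right|\le\left|V-(R^{n}-\alpha)\bar\gamma m\Lambda_{n}z^{n}\right|.
\]
I would then fix $z$ with $|z|\ge 1$ and choose $\arg\gamma$ so that the vector $(R^{n}-\alpha)\bar\gamma\Lambda_{n}z^{n}$ is codirectional with $V$. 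Because $P^{\star}$ has all its zeros in $|z|\le 1$ with $\min_{|z|=1}|P^{\star}(z)|=m$, Lemma~\ref{l2'} applied to $P^{\star}$ yields $|V|\ge|R^{n}-\alpha||\Lambda_{n}||z|^{n}m$, so this choice collapses the right-hand side to $|V|-|\gamma|m|R^{n}-\alpha||\Lambda_{n}||z|^{n}$. The reverse triangle inequality on the left gives $|U|-|\gamma|m|1-\alpha||\lambda_{0}|$ as a lower bound, hence
\[
|U|\le|V|-|\gamma|m\bigl(|R^{n}-\alpha||\Lambda_{n}||z|^{n}-|1-\alpha||\lambda_{0}|\bigr).
\]
Letting $|\gamma|\to 1^{-}$ and invoking $|z|^{n}\ge 1$ then produces \eqref{le3'}.

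The only delicate point I anticipate is the sign bookkeeping in the final passage: after the optimal alignment of $\arg\gamma$ the right-hand modulus must genuinely open as a \emph{difference}, which is exactly the content of Lemma~\ref{l2'}; and replacing $|z|^{n}$ by $1$ in the deficit term needs to be checked in both regimes of the sign of $|R^{n}-\alpha||\Lambda_{n}|-|1-\alpha||\lambda_{0}|$. In either regime $|z|^{n}\ge 1$ makes the intermediate bound no larger than the claimed one, so the lemma follows in both cases without further case analysis.
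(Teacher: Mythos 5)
Your proof is correct and follows essentially the same route as the paper: perturb $P$ by a constant multiple of $m$ (the paper uses $P(z)+\lambda m$, you use $P(z)-\gamma m$), apply Lemma~\ref{l3} to the perturbed polynomial, use $B[c]=\lambda_0 c$ and $B[z^n]=\Lambda_n z^n$ to expand, and then pick the argument of the perturbation parameter so that Lemma~\ref{l2'} (applied to $P^{\star}$) opens the right-hand modulus as a genuine difference while the reverse triangle inequality handles the left. The only cosmetic difference is that you invoke Rouch\'e where the paper uses the maximum modulus principle to show the perturbed polynomial is still zero-free in $|z|<1$.
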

       \begin{proof}
By hypothesis $P(z)$ has all its zeros in $|z|\geq 1$ and 
\begin{align}\label{q1}
m\leq |P(z)|\,\,\,\textnormal{for}\,\,\,\,|z|=1.
\end{align}
We show $F(z)=P(z)+\lambda m$ does not vanish in $|z|<1$ for every $\lambda$ with $|\lambda|<1.$ This is obvious if $m=0$ that is, if $P(z)$ has a zero on $|z|=1.$ So we assume all the zeros of $P(z)$ lie in $|z|>1,$ then $m>0$ and by the maximum modulus principle, it follows from \eqref{q1},
\begin{align}\label{q2}
m< |P(z)|\,\,\,\textnormal{for}\,\,\,|z|<1.
\end{align} 
Now if $F(z)=P(z)+\lambda m=0$ for some $z_0$ with $|z_0|<1,$ then
\begin{align*}
P(z_0)+\lambda m=0
\end{align*} 
This implies
\begin{align}
|P(z_0)|= |\lambda|m\leq m,\,\,\,\textnormal{for}\,\,\,|z_0|<1
\end{align} 
which is clearly contradiction to \eqref{q2}. Thus the polynomial $F(z)$ does not vanish in $|z|<1$  for every $\lambda$ with $|\lambda|<1.$ applying Lemma \ref{l3} to the polynomial $F(z),$ we get
\begin{align*}
|B[F\circ\sigma](z)-\alpha B[F](z)|\leq |B[F^{\star}\circ\sigma](z)-\alpha B[F^{\star}](z)
\end{align*} 
for $|z|=1$ and $R>1.$ Replacing $F(z)$ by $P(z)+\lambda m,$ we obtain
\begin{align}\label{q3}\nonumber
|B[P\circ\sigma](z)&-\alpha B[P](z)+\lambda(1-\alpha)\lambda_0 m|\\&\leq |B[P^{\star}\circ\sigma](z)-\alpha B[P^{\star}](z)+\bar{\lambda} (R^n-\alpha)\Lambda_nz^n m|
\end{align}
 Now choosing the argument of $\lambda$ in the right hand side of \eqref{q3} such that
 \begin{align*}\nonumber
|B[P^{\star}\circ\sigma](z)&-\alpha B[P^{\star}](z)+\bar{\lambda} (R^n-\alpha)\Lambda_nz^nm|\\&=|B[P^{\star}\circ\sigma](z)-\alpha B[P^{\star}](z)|-|\lambda||R^n-\alpha||\Lambda_n|m
\end{align*}
for $|z|=1,$which is possible by Lemma \ref{l2'},we get
\begin{align*}\nonumber
|B[P^{\star}\circ\sigma](z)-\alpha B[P^{\star}](z)|&-|\lambda||1-\alpha||\lambda_0|m\\\leq& |B[P^{\star}\circ\sigma](z)-\alpha B[P^{\star}](z)|-|\lambda||R^n-\alpha||\Lambda_n|m
\end{align*}
Equivalently,
\begin{align*}
     \big|B[P\circ\sigma]&(z)-\alpha B[P](z)\big|\\\leq &\left|B[P^{\star}\circ\sigma](z)-\alpha B[P^{\star}](z)\right|-(|R^n-\alpha||\Lambda_n|-|1-\alpha||\lambda_0|)m,
\end{align*}
       \end{proof}
 Next we describe a result of Arestov \cite{va}.\\
   
  For $\delta = (\delta_{0},\delta_{1},\cdots, \delta_{n})\in\mathbb{C}^{n+1}$ and $P(z)=\sum_{j=0}^{n}a_{j}{z}^{j}\in \mathscr{P}_{n}$, we define 
   
  \[\psi_\delta P(z)=\sum_{j=0}^{n}\delta_{j} a_{j}{z}^{j}.\]
   
  The operator $\psi_\delta$ is said to be admissible if it preserves one of the following properties:\\
    (i)     \,\,  $P(z)$ has all its zeros in $\left\{z \in \mathbb{C}: |z|\leq 1\right\},$\\
   (ii)     \,$P(z)$ has all its zeros in$ \left\{z \in \mathbb{C}: |z|\geq 1\right\}.$\\
   The result of Arestov \cite{va} may now be stated as follows.\\
 
 \begin{lemma} \cite[Theorem 4]{va} \label{l4}
    Let $\phi(x) = \rho(log x)$ where $\rho$ is a convex non decreasing function on $\mathbb{R}.$ Then for all $P\in \mathscr{P}_{n}$ and each admissible operator $\psi_\delta$,
   \[\int_{0}^{2\pi} \phi(|\psi_\delta P(e^{i\theta})|)d\theta \leq \int_{0}^{2\pi}\phi(C(\delta,n)|P(e^{i\theta})|)d\theta, \]
  
  where $C( \delta,n)= max(|\delta_{0}|,|\delta_{n}|).$\\
  \end{lemma}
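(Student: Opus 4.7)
The plan is to follow Arestov's original argument, which reduces the general convex-functional inequality to a family of sharp $L^p$ contraction inequalities via approximation. I would organise the proof into four stages.

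First, by the homogeneity of both sides in $\delta$, I would normalize so that $C(\delta,n) = \max(|\delta_0|,|\delta_n|) = 1$, replacing $\delta$ by $\delta/C(\delta,n)$ when this quantity is nonzero. The degenerate case $\delta_0 = \delta_n = 0$ is handled by a continuity/perturbation argument together with Hurwitz's theorem, which guarantees that the admissibility class is closed under uniform limits.

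Second, since $\rho$ is convex and non-decreasing on $\mathbb{R}$, it is the pointwise supremum of its affine minorants, so $\phi(x) = \rho(\log x)$ can be expressed as an increasing pointwise limit of nonnegative linear combinations of the power functions $x \mapsto x^p$, $p > 0$ (together with additive constants). By monotone convergence, it therefore suffices to prove the $L^p$-contraction $\|\psi_\delta P\|_p \leq \|P\|_p$ for every $0 < p < \infty$ under the normalization $C(\delta,n) \leq 1$. A further standard step reduces this, by analyticity in $p$ and density, to the range of even integers $p = 2k$, for which the integrals are polynomials in the coefficients of $P$ and $\overline{P}$ and can be manipulated explicitly.

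Third, for the $L^{2k}$-contraction I would invoke the Schur-Szeg\H{o} composition theorem: admissibility of $\psi_\delta$ (say, preservation of property (i)) forces the associated symbol $Q_\delta(z) = \sum_{j=0}^n \delta_j \binom{n}{j} z^j$ to have all its zeros in $|z|\leq 1$. Factoring $Q_\delta = \prod_k (\alpha_k + \beta_k z)$ into linear factors induces a decomposition of $\psi_\delta$ as a composition of elementary degree-one admissible multipliers, each of which can be analysed directly. The extremal family for each such elementary operator is $P(z) = az^n + b$ with $|a|=|b|$, on which the sharp contraction constant $\max(|\alpha_k|, |\beta_k|)$ is verified by a direct computation.

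The main obstacle lies in this third stage: the $L^p$-contraction constants behave subtly under Schur-Szeg\H{o} composition, and one must track carefully how the extremal constants of the linear factors aggregate to give the global constant $C(\delta,n) = \max(|\delta_0|,|\delta_n|)$ rather than, say, the product of the individual constants. Arestov resolves this by interchanging the roles of the coefficient index and the integration variable and by exploiting Grace-Heawood-type apolarity results, which pin down the extremum of the finite-dimensional problem to the one-parameter family $az^n + b$, $|a|=|b|$. Once the extremal case is verified, the general inequality follows by the reductions of stages one through three.
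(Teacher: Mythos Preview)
The paper gives no proof of this lemma whatsoever: it is imported verbatim as Theorem~4 of Arestov \cite{va} and used as a black box, the only application being the immediate specialisation $\phi(x)=x^{p}$ that yields inequality \eqref{le4'}. So there is nothing on the paper's side to compare your argument against.

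As a reconstruction of Arestov's original proof, your outline is in the right spirit (normalisation, reduction to a tractable family of $\phi$, Schur--Szeg\H{o} factorisation of the symbol, identification of the extremal family $az^n+b$), but Stage~2 contains a genuine gap. Writing $\rho$ as the supremum of its affine minorants $t\mapsto at+b$ (with $a\geq 0$) yields $\phi(x)=\sup_{a,b}(a\log x+b)$, which is a supremum of logarithms, not of power functions $x^{p}$; moreover, because the supremum sits \emph{inside} the $\theta$-integral on both sides, knowing the inequality for each affine $\rho$ separately does not let you pass to the supremum in the direction you need. Thus the reduction ``general convex $\rho$ $\Rightarrow$ $L^p$-contraction'' does not follow from the argument you describe. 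Arestov's actual route is different: he first establishes a pointwise subordination-type statement for admissible $\psi_\delta$ acting on polynomials whose zeros lie in the closed disk, from which the integral inequality for every $\phi$ of the stated form follows simultaneously, rather than by approximation through power means. Your Stage~3 concern about aggregating the factor constants is real, and Arestov circumvents it precisely by avoiding a factor-by-factor $L^p$ estimate.
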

  In particular, Lemma \ref{l4} applies with $\phi:x\rightarrow x^{p}$ for every $p\in(0,\infty)$. Therefore, we have 
  \begin{equation}\label{le4'}
  \left\{\int_{0}^{2\pi}(|\psi_\delta P(e^{i\theta})|^{p})d\theta\right\}^{1/p} \leq C(\delta,n)\left\{\int_{0}^{2\pi}|P(e^{i\theta})|^{p}d\theta \right\}^{1/p}.
  \end{equation}
  We use \eqref{le4'} to prove the following interesting result.\\
  \begin{lemma}\label{l5}
  If $P\in \mathscr{P}_{n}$ and $P(z)$ does not vanish in $|z|<1,$ then for every $p > 0$, $R > 1$ and for  $\gamma$ real, $0 \leq\gamma <2\pi$,
\begin{align}\nonumber\label{le5}
  \int_{0}^{2\pi}\Big|\Big\{B[P\circ\sigma](e^{i\theta})&-\alpha B[P](e^{i\theta})\Big\}\mbox{}\, e^{i\gamma}\\\nonumber &+\left\{B[P^{\star}\circ\sigma]^{\star}(e^{i\theta})-\bar{\alpha} B[P^{\star}]^{\star}(e^{i\theta})\right\}\Big|^{p}d\theta\\\leq \Big|(R^{n}-\alpha )&\Lambda_n e^{i\gamma}+(1-\bar{\alpha})\bar{\lambda_{0}}\Big|^{p}\int_{0}^{2\pi}\left|P(e^{i\theta})\right|^{p}d\theta,
\end{align}
  where $B\in \mathcal{B}_{n}$, $\sigma(z):=Rz$, $B[P^{\star}\circ\sigma]^{\star}(z):=(B[P^{\star}\circ\sigma](z))^{\star}$ and $\Lambda_n$ is defined by \eqref{phi}.\\
  \end{lemma}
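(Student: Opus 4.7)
The plan is to recognize the left-hand integrand as $|\psi_{\delta}P(e^{i\theta})|^{p}$ for an appropriate admissible multiplier operator $\psi_{\delta}$ in the sense of Arestov, and then to invoke Lemma \ref{l4} (through \eqref{le4'}) with $\phi(x)=x^{p}$.

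First I would compute the multiplier sequence explicitly. Writing $P(z)=\sum_{j=0}^{n}a_{j}z^{j}$, the defining formula \eqref{BO} gives $B[P](z)=\sum_{j}\phi_{n}(j)\,a_{j}z^{j}$ with $\phi_{n}(j):=\lambda_{0}+\lambda_{1}(nj/2)+\lambda_{2}n^{2}j(j-1)/8$, so that $\phi_{n}(0)=\lambda_{0}$ and $\phi_{n}(n)=\Lambda_{n}$. Since $P^{\star}(z)=\sum_{k}\overline{a_{n-k}}\,z^{k}$ and $(cQ)^{\star}=\bar{c}\,Q^{\star}$ for any scalar $c$, a direct expansion yields
\[
\bigl\{B[P\circ\sigma](z)-\alpha B[P](z)\bigr\}e^{i\gamma}+\bigl\{B[P^{\star}\circ\sigma]^{\star}(z)-\bar{\alpha}B[P^{\star}]^{\star}(z)\bigr\}=\sum_{k=0}^{n}\delta_{k}a_{k}z^{k},
\]
where $\delta_{k}:=\phi_{n}(k)(R^{k}-\alpha)e^{i\gamma}+\overline{\phi_{n}(n-k)}(R^{n-k}-\bar{\alpha})$. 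The key algebraic observation is the symmetry $\overline{\delta_{n-k}}=e^{-i\gamma}\delta_{k}$, from which $|\delta_{0}|=|\delta_{n}|=|(R^{n}-\alpha)\Lambda_{n}e^{i\gamma}+(1-\bar{\alpha})\overline{\lambda_{0}}|$; this is exactly the constant $C(\delta,n)=\max(|\delta_{0}|,|\delta_{n}|)$ demanded on the right of \eqref{le4'}.

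The bulk of the work is showing that $\psi_{\delta}$ is admissible in the sense of property~(ii): it carries polynomials with all zeros in $|z|\geq 1$ to polynomials with all zeros in $|z|\geq 1$. Given such a $P$, set $f(z):=B[P\circ\sigma](z)-\alpha B[P](z)$, $h(z):=B[P^{\star}\circ\sigma](z)-\alpha B[P^{\star}](z)$ and $g(z):=h^{\star}(z)=B[P^{\star}\circ\sigma]^{\star}(z)-\bar{\alpha}B[P^{\star}]^{\star}(z)$, so that $\Phi(z):=\psi_{\delta}P(z)=e^{i\gamma}f(z)+g(z)$. Because $P^{\star}$ has all zeros in $|z|\leq 1$, Lemma \ref{l2} combined with a Rouch\'e argument (using $((R+1)/2)^{n}>1$) shows that $P^{\star}(Rz)-\alpha P^{\star}(z)$ has all its zeros in $|z|<1$; Lemma \ref{l1} then places all the zeros of $h$ in $|z|\leq 1$, whence $g=h^{\star}$ has all its zeros in $|z|\geq 1$ and is zero-free in $|z|<1$. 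Lemma \ref{l3} applied to $P$ yields $|f(z)|\leq|h(z)|$ for $|z|\geq 1$, which on $|z|=1$ reads $|f|\leq|g|$ since $|h^{\star}|=|h|$ there. The modulus bound forces the vanishing orders of $f$ to dominate those of $g$ at every common zero on $|z|=1$, so $f/g$ extends to a holomorphic function on $|z|\leq 1$ bounded by $1$ on the boundary; by the maximum-modulus principle $|f/g|\leq 1$ throughout $|z|\leq 1$. If $\Phi(z_{0})=0$ at some $|z_{0}|<1$ then $|f(z_{0})/g(z_{0})|=1$ is attained in the open disc, forcing $f\equiv cg$ with $|c|=1$; the resulting $\Phi=(e^{i\gamma}c+1)g$ is either identically zero (in which case \eqref{le5} is trivial) or inherits its zeros from $g$, which all lie in $|z|\geq 1$, contradicting $|z_{0}|<1$.

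The main obstacle I foresee is this admissibility step: carefully chaining Lemmas \ref{l1}, \ref{l2}, \ref{l3} to locate the zeros of $g$ and to establish $|f|\leq|g|$ on $|z|=1$ simultaneously, and then closing the loop with a maximum-modulus argument that tolerates common boundary zeros of $f$ and $g$. Once admissibility of $\psi_{\delta}$ is secured, Arestov's inequality \eqref{le4'} with $\phi(x)=x^{p}$ produces \eqref{le5} immediately with the advertised constant $|(R^{n}-\alpha)\Lambda_{n}e^{i\gamma}+(1-\bar{\alpha})\overline{\lambda_{0}}|^{p}$.
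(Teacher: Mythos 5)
Your proposal is correct and follows essentially the same route as the paper: identify the left-hand side as $\psi_\delta P$ with $C(\delta,n)=|(R^n-\alpha)\Lambda_n e^{i\gamma}+(1-\bar\alpha)\overline{\lambda_0}|$, prove admissibility by combining Lemma \ref{l3} with the fact that $B[P^{\star}\circ\sigma]^{\star}-\bar{\alpha}B[P^{\star}]^{\star}$ is zero-free in $|z|<1$ and a maximum-modulus/Rouch\'e step, then apply Arestov's inequality \eqref{le4'}. Your treatment is in fact slightly more careful than the paper's at the maximum-modulus stage (common boundary zeros and the equality case $f\equiv cg$), but the argument is the same in substance.
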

  \begin{proof}
   Since $P\in \mathscr P_{n}$ and $P^{\star}(z)= z^{n}\overline{P(1/\bar{z})}$, by Lemma \ref{l3} , we have for $|z|\geq 1,$
  \begin{equation}\label{l5p1}
  \left|B[P\circ\sigma](z)-\alpha B[P](z)\right|\leq \left|B[P^{\star}\circ\sigma](z)-\alpha B[P^{\star}](z)\right|,
  \end{equation}
  Also, since $P^{\star}(Rz)-\alpha P^{\star}(z) = R^{n}z^{n}\overline{P(1/R\bar{z})}-\alpha z^{n}\overline{P(1/\bar{z})}$, 
  \begin{align*}
  B[P^{\star}\circ\sigma](z)&-\alpha B[P^{\star}](z)=\lambda_{0}\Big\{ R^{n}z^{n}\overline{P(1/R\bar{z})}-\alpha z^{n}\overline{P(1/\bar{z})}\Big\}\\
  +\lambda_{1}&\left(\frac{nz}{2}\right)\Big\{\left(nR^{n}z^{n-1}\overline{P(1/R\bar{z})}-R^{n-1}z^{n-2}\overline{P^{\prime}(1/R\bar{z})}\right)\\&-\alpha\left(nz^{n-1}\overline{P(1/\bar{z})}-z^{n-2}\overline{P^{\prime}(1/\bar{z})}\right)\Big\}\\
  +\frac{\lambda_{2}}{2!}&\left(\frac{nz}{2}\right)^{2}\Big\{\Big(n(n-1)R^{n}z^{n-2}\overline{P(1/R\bar{z})}
  \\&-2(n-1)R^{n-1}z^{n-3}\overline{P^{\prime}(1/R\bar{z})}+R^{n-2}z^{n-4}\overline{P^{\prime\prime}(1/R\bar{z})}\Big)\\-\alpha \Big(n&(n-1)z^{n-2}\overline{P(1/\bar{z})}
    -2(n-1)z^{n-3}\overline{P^{\prime}(1/\bar{z})}\\&+r^{n-2}z^{n-4}\overline{P^{\prime\prime}(1/\bar{z})}\Big) \Big\}
  \end{align*}
  and therefore,
  \begin{align}\label{l5p2}\nonumber
   B[P^{\star}\circ\sigma]^{\star}(z)&-\bar{\alpha} B[P^{\star}]^{\star}(z)=\Big(B[P^{\star}\circ\sigma](z)-\alpha B[P^{\star}](z)\Big)^{\star}\\\nonumber 
  =\Big(\bar{\lambda_{0}}+&\bar{\lambda_{1}}\frac{n^{2}}{2}+\bar{\lambda_{2}}\frac{n^{3}(n-1)}{8}\Big)\Big\{R^{n}P(z/R)-\bar{\alpha} P(z)\Big\}\\\nonumber
  -\Big(\bar{\lambda_{1}}&\frac{n}{2}+\bar{\lambda_{2}}\frac{n^{2}(n-1)}{4}\Big)\Big\{ R^{n-1}zP^{\prime}(z/R)-\bar{\alpha} zP^{\prime}(z)\Big\}\\ +\bar{\lambda_{2}}&\frac{n^{2}}{8}\Big\{ R^{n-2}z^{2}P^{\prime\prime}(z/R)-\bar{\alpha} z^{2}P^{\prime\prime}(z)\Big\}.
  \end{align}
  Also,\\
  $$| B[P^{\star}\circ\sigma](z)-\alpha B[P^{\star}](z)|=|B[P^{\star}\circ\sigma]^{\star}(z)-\bar{\alpha} B[P^{\star}]^{\star}(z)|\,\,\,\hbox{}\,\,\,\textrm{for} \,\,\,|z|=1.$$
  Using this in \eqref{l5p1}, we get 
 $$| B[P\circ\sigma](z)-\alpha B[P](z)|\leq |B[P^{\star}\circ\sigma]^{\star}(z)-\bar{\alpha} B[P^{\star}]^{\star}(z)|\,\,\,\hbox{}\,\,\,\textrm{for} \,\,\,|z|=1.$$
  
  As in Lemma \ref{l3}, the polynomial $P^{\star}\circ\sigma(z)-\alpha P^{\star}(z),$ has all its zeros in $|z|<1$ and by Lemma \ref{l1}, $B[P^{\star}\circ\sigma](z)-\alpha B[P^{\star}](z),$ also has all its zero in $|z|<1,$ therefore, $B[P^{\star}\circ\sigma]^{\star}(z)-\bar{\alpha} B[P^{\star}]^{\star}(z)$ has all its zeros in $|z|\geq 1.$ Hence by the maximum modulus principle, 
  \begin{equation}\label{l5p3}
  | B[P\circ\sigma](z)-\alpha B[P](z)|< |B[P^{\star}\circ\sigma]^{\star}(z)-\bar{\alpha} B[P^{\star}]^{\star}(z)|\,\,\,\hbox{}\,\,\,\textrm{for} \,\,\,|z|<1
  \end{equation}
  A direct application of Rouche's theorem shows that with $P(z)=a_{n}z^{n}+\cdots+a_{0},$
  \begin{align*}
  \psi_\delta P(z)=& \Big\{B[P\circ\sigma](z)-\alpha B[P](z)\Big\}e^{i\gamma}+B[P^{\star}\circ\sigma]^{\star}(z)-\bar{\alpha} B[P^{\star}]^{\star}(z),\\
  =& \left\{(R^{n}-\alpha )\left(\lambda_{0}+\lambda_{1}\frac{n^{2}}{2}+\lambda_{2}\frac{n^{3}(n-1)}{8}\right)e^{i\gamma}+(1-\bar{\alpha})\bar{\lambda_{0}}\right\}a_{n}z^{n}\\
  &+\cdots+\left\{(R^{n}-\bar{\alpha} )\left(\bar{\lambda_{0}}+\bar{\lambda_{1}}\frac{n^{2}}{2}+\bar{\lambda_{2}}\frac{n^{3}(n-1)}{8}\right)+e^{i\gamma}(1-\alpha)\lambda_{0}\right\}a_{0},
  \end{align*}
  has all its zeros in $|z|\geq 1$, for every real $\gamma,$ $0\leq \gamma\leq 2\pi.$  Therefore, $\psi_\delta$ is an admissible operator. Applying \eqref{le4'} of Lemma \ref{l4}, the desired result follows immediately for each $p > 0$.\\
  \end{proof}
 
 We also need the following lemma \cite{ar97}.
 \begin{lemma}\label{abc}
 If $A,B,C$ are non-negative real numbers such that $B+C\leq A,$ then for each real number $\gamma,$
 $$ |(A-C)e^{i\gamma}+(B+C)|\leq |Ae^{i\gamma}+B|.    $$
 \end{lemma}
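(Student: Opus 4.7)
The plan is to square both sides of the claimed inequality and reduce the problem to an elementary arithmetic check. Since both sides are non-negative, it suffices to show
$$|(A-C)e^{i\gamma}+(B+C)|^2 \leq |Ae^{i\gamma}+B|^2,$$
and the identity $|ue^{i\gamma}+v|^2 = u^2+v^2+2uv\cos\gamma$, valid for real $u,v$, turns each side into a polynomial in $A,B,C$ and $\cos\gamma$.

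Next I would expand and compute the difference ``right minus left''. The constant (non-$\cos\gamma$) part is $A^2 + B^2 - (A-C)^2 - (B+C)^2$, which telescopes to $2C(A-B-C)$. The coefficient of $\cos\gamma$ is $2AB - 2(A-C)(B+C)$, which simplifies to $2C(B+C-A) = -2C(A-B-C)$. Collecting, I expect the difference to take the clean form
$$2C(A-B-C)(1-\cos\gamma).$$

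The conclusion is then immediate. By hypothesis $C\geq 0$, $B+C\leq A$ (so $A-B-C\geq 0$), and $1-\cos\gamma\geq 0$ always. The product of three non-negative quantities is non-negative, yielding the desired inequality.

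There is no real conceptual obstacle here; the only thing requiring care is the algebraic bookkeeping, since an overlooked sign would obscure the factorization that gives the $(1-\cos\gamma)$ dependence and hence the sign control. A tidy alternative, which I might use as a sanity check, is to interpret both sides geometrically as distances between two points moving on a circle of radius $A$ (resp.\ $A-C$) about fixed centers, reducing the claim to a monotonicity statement; but the direct squaring argument above is the cleanest route.
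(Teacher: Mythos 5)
Your computation is correct: the difference of the squared moduli is exactly $2C(A-B-C)(1-\cos\gamma)$, which is non-negative under the hypotheses, and squaring is legitimate since both sides are moduli. Note that the paper itself gives no proof of this lemma (it is quoted from Aziz and Rather, \emph{Glasnik Mat.} 32 (1997)); your direct squaring argument is essentially the standard one used there, so there is nothing to reconcile.
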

  
  \section{\bf Proof of Theorem}
  \begin{proof}[\textnormal{\bf Proof of Theorem \ref{t2}}]
  By hypothesis $P(z)$ does not vanish in $|z|<1,$ therefore by Lemma \ref{l3'}, we have
  \begin{align}\label{r1}\nonumber
     \big|B[P\circ\sigma]&(z)-\alpha B[P](z)\big|\\\leq &\left|B[P^{\star}\circ\sigma](z)-\alpha B[P^{\star}](z)\right|-(|R^n-\alpha||\Lambda_n|-|1-\alpha||\lambda_0|)m,
  \end{align}
   for $|z|=1,$ $|\alpha|\leq 1$ and $R>1$ where $P^{\star}(z)=z^n\overline{P(1/\overline{z})}.$
 Since $B[P^{\star}\circ\sigma]^{\star}(z)-\bar{\alpha}B[P^{\star}]^{\star}(z)$ is the conjugate of $B[P^{\star}\circ\sigma](z)-\alpha B[P^{\star}](z)$ and $$|B[P^{\star}\circ\sigma]^{\star}(z)-\bar{\alpha}B[P^{\star}]^{\star}(z)|=|B[P^{\star}\circ\sigma](z)-\alpha B[P^{\star}](z)|\,\,\,\,\textnormal{for}\,\,\,\,\,|z|=1.$$
 Thus \eqref{r1} can be written as 
 \begin{align}\label{r2}\nonumber
 |B[P\circ\sigma](z)&-\alpha B[P](z)|+\dfrac{(|R^n-\alpha||\Lambda_n|-|1-\alpha||\lambda_0|)m}{2}\\\leq&\left|B[P^{\star}\circ\sigma]^{\star}(z)-\bar{\alpha}B[P^{\star}]^{\star}(z)\right|-\dfrac{(|R^n-\alpha||\Lambda_n|-|1-\alpha||\lambda_0|)m}{2}\,\,\,\,\,\mbox{}\textrm{for}\,\,\,\,\,|z|=1
 \end{align}
 Taking 
 $$A= \left|B[P^{\star}\circ\sigma]^{\star}(z)-\bar{\alpha}B[P^{\star}]^{\star}(z)\right|,\,\,\,B=\left|B[P\circ\sigma](z)-\alpha B[P](z)\right|$$
 and 
 $$  C=\dfrac{(|R^n-\alpha||\Lambda_n|-|1-\alpha||\lambda_0|)m}{2} $$
 in Lemma \ref{abc} and noting by \eqref{r2} that 
 $$ B+C\leq A-C\leq A,   $$
 we get for every real $\gamma$,
 \begin{align*}
 \Big|\Big\{\big|B[P^{\star}&\circ\sigma]^{\star}(e^{i\theta})-\bar{\alpha}B[P^{\star}]^{\star}(e^{i\theta})\big|-\dfrac{(|R^n-\alpha||\Lambda_n|-|1-\alpha||\lambda_0|)m}{2}\Big\}e^{i\gamma}\\+\Big\{&\big|B[P\circ\sigma](e^{i\theta})-\alpha B[P](e^{i\theta})\big|+\dfrac{(|R^n-\alpha||\Lambda_n|-|1-\alpha||\lambda_0|)m}{2}\Big\}\Big|\\&\leq \Big|\big|B[P^{\star}\circ\sigma]^{\star}(e^{i\theta})-\bar{\alpha}B[P^{\star}]^{\star}(e^{i\theta})\big|e^{i\gamma}+\big|B[P\circ\sigma](e^{i\theta})-\alpha B[P](e^{i\theta})\big|\Big|.
 \end{align*}
 This implies for each $p>0,$
 \begin{align}\nonumber\label{r3}
\int\limits_{0}^{2\pi} \Bigg|\Big\{\big|B[P^{\star}&\circ\sigma]^{\star}(e^{i\theta})-\bar{\alpha}B[P^{\star}]^{\star}(e^{i\theta})\big|-\dfrac{(|R^n-\alpha||\Lambda_n|-|1-\alpha||\lambda_0|)m}{2}\Big\}e^{i\gamma}\\\nonumber+\Big\{&\big|B[P\circ\sigma](e^{i\theta})-\alpha B[P](e^{i\theta})\big|+\dfrac{(|R^n-\alpha||\Lambda_n|-|1-\alpha||\lambda_0|)m}{2}\Big\}\Bigg|^pd\theta\\&\leq \int\limits_{0}^{2\pi}\Big|\big|B[P^{\star}\circ\sigma]^{\star}(e^{i\theta})-\bar{\alpha}B[P^{\star}]^{\star}(e^{i\theta})\big|e^{i\gamma}+\big|B[P\circ\sigma](e^{i\theta})-\alpha B[P](e^{i\theta})\big|\Big|^pd\theta.
 \end{align}
 Integrating both sides of \eqref{r3} with respect to $\gamma$ from $0$ to $2\pi,$ we get with the help of Lemma \ref{l5} for each $p>0,$
 \begin{align}\nonumber\label{r4}
\int\limits_{0}^{2\pi}\int\limits_{0}^{2\pi} \Bigg|&\Big\{\big|B[P^{\star}\circ\sigma]^{\star}(e^{i\theta})-\bar{\alpha}B[P^{\star}]^{\star}(e^{i\theta})\big|-\dfrac{(|R^n-\alpha||\Lambda_n|-|1-\alpha||\lambda_0|)m}{2}\Big\}e^{i\gamma}\\\nonumber+\Big\{&\big|B[P\circ\sigma](e^{i\theta})-\alpha B[P](e^{i\theta})\big|+\dfrac{(|R^n-\alpha||\Lambda_n|-|1-\alpha||\lambda_0|)m}{2}\Big\}\Bigg|^pd\theta d\gamma\\\nonumber\leq& \int\limits_{0}^{2\pi}\int\limits_{0}^{2\pi}\Bigg|\big|B[P^{\star}\circ\sigma]^{\star}(e^{i\theta})-\bar{\alpha}B[P^{\star}]^{\star}(e^{i\theta})\big|e^{i\gamma}+\big|B[P\circ\sigma](e^{i\theta})-\alpha B[P](e^{i\theta})\big|\Bigg|^pd\theta d\gamma.\\\nonumber\leq&
 \int\limits_{0}^{2\pi}\Bigg\{\int\limits_{0}^{2\pi}\Bigg|\big|B[P^{\star}\circ\sigma]^{\star}(e^{i\theta})-\bar{\alpha}B[P^{\star}]^{\star}(e^{i\theta})\big|e^{i\gamma}+\big|B[P\circ\sigma](e^{i\theta})-\alpha B[P](e^{i\theta})\big|\Bigg|^pd\gamma \Bigg\}d\theta\\\nonumber\leq&
 \int\limits_{0}^{2\pi}\Bigg\{\int\limits_{0}^{2\pi}\Bigg|\big\{B[P^{\star}\circ\sigma]^{\star}(e^{i\theta})-\bar{\alpha}B[P^{\star}]^{\star}(e^{i\theta})\big\}e^{i\gamma}+\big\{B[P\circ\sigma](e^{i\theta})-\alpha B[P](e^{i\theta})\big\}\Bigg|^p d\gamma\Bigg\}d\theta\\\nonumber\leq&
\int\limits_{0}^{2\pi}\Bigg\{\int\limits_{0}^{2\pi}\Bigg|\big\{B[P^{\star}\circ\sigma]^{\star}(e^{i\theta})-\bar{\alpha}B[P^{\star}]^{\star}(e^{i\theta})\big\}e^{i\gamma}+\big\{B[P\circ\sigma](e^{i\theta})-\alpha B[P](e^{i\theta})\big\}\Bigg|^p d\theta\Bigg\}d\gamma\\\leq& \int\limits_{0}^{2\pi}\Bigg|(R^{n}-\alpha )\Lambda_n e^{i\gamma}+(1-\bar{\alpha})\bar{\lambda_{0}}\Bigg|^{p}d\gamma\int_{0}^{2\pi}\left|P(e^{i\theta})\right|^{p}d\theta
 \end{align}
 Now it can be easily verified that for every real number $\gamma$ and $s \geq 1$,
 $$\left|s+e^{i\alpha}\right| \geq \left|1+e^{i\alpha}\right|.$$
 This implies for each $ p > 0$,
 \begin{equation}\label{r5}
 \int_{0}^{2\pi}\left|s+e^{i\gamma}\right|^{p}d\gamma \geq \int_{0}^{2\pi}\left|1+e^{i\gamma}\right|^{p}d\gamma. 
 \end{equation}
 If $\big|B[P\circ\sigma](e^{i\theta})-\alpha B[P](e^{i\theta})\big|+\dfrac{(|R^n-\alpha|-|1-\alpha||\lambda_0|)m}{2}\neq 0,$
  we take $$ s = \dfrac{\big|B[P^{\star}\circ\sigma]^{\star}(e^{i\theta})-\bar{\alpha} B[P^{\star}]^{\star}(e^{i\theta})\big|-\dfrac{(|R^n-\alpha||\Lambda_n|-|1-\alpha||\lambda_0|)m}{2}}{\big|B[P\circ\sigma](e^{i\theta})-\alpha B[P](e^{i\theta})\big|+\dfrac{(|R^n-\alpha||\Lambda_n|-|1-\alpha||\lambda_0|)m}{2}}$$, then by \eqref{r2}, $s \geq 1$ and we get with the help of \eqref{r5},\\
  \begin{align}\nonumber\label{r6}
 \int\limits_{0}^{2\pi} \Big|\Big\{&\big|B[P^{\star}\circ\sigma]^{\star}(e^{i\theta})-\bar{\alpha}B[P^{\star}]^{\star}(e^{i\theta})\big|-\dfrac{(|R^n-\alpha||\Lambda_n|-|1-\alpha||\lambda_0|)m}{2}\Big\}e^{i\gamma}\\\nonumber+\Big\{&\big|B[P\circ\sigma](e^{i\theta})-\alpha B[P](e^{i\theta})\big|+\dfrac{(|R^n-\alpha||\Lambda_n|-|1-\alpha||\lambda_0|)m}{2}\Big\}\Big|^p d\gamma\\\nonumber=&\Bigg|\big|B[P\circ\sigma](e^{i\theta})-\alpha B[P](e^{i\theta})\big|+\dfrac{(|R^n-\alpha||\Lambda_n|-|1-\alpha||\lambda_0|)m}{2}\Bigg|^p\\\nonumber&\times\int\limits_{0}^{2\pi}\left|e^{i\gamma}+\dfrac{\big|B[P^{\star}\circ\sigma]^{\star}(e^{i\theta})-\bar{\alpha}B[P^{\star}]^{\star}(e^{i\theta})\big|-\dfrac{(|R^n-\alpha||\Lambda_n|-|1-\alpha||\lambda_0|)m}{2}}{\big|B[P\circ\sigma](e^{i\theta})-\alpha B[P](e^{i\theta})\big|+\dfrac{(|R^n-\alpha||\Lambda_n|-|1-\alpha||\lambda_0|)m}{2}}\,\right|^pd\gamma\\\nonumber=&\Bigg|\big|B[P\circ\sigma](e^{i\theta})-\alpha B[P](e^{i\theta})\big|+\dfrac{(|R^n-\alpha||\Lambda_n|-|1-\alpha||\lambda_0|)m}{2}\Bigg|^p\\\nonumber&\times\int\limits_{0}^{2\pi}\left|e^{i\gamma}+\Bigg|\dfrac{\big|B[P^{\star}\circ\sigma]^{\star}(e^{i\theta})-\bar{\alpha}B[P^{\star}]^{\star}(e^{i\theta})\big|-\dfrac{(|R^n-\alpha||\Lambda_n|-|1-\alpha||\lambda_0|)m}{2}}{\big|B[P\circ\sigma](e^{i\theta})-\alpha B[P](e^{i\theta})\big|+\dfrac{(|R^n-\alpha||\Lambda_n|-|1-\alpha||\lambda_0|)m}{2}}\Bigg|^p\,\right|d\gamma\\\geq &\Bigg|\big|B[P\circ\sigma](e^{i\theta})-\alpha B[P](e^{i\theta})\big|+\dfrac{(|R^n-\alpha||\Lambda_n|-|1-\alpha||\lambda_0|)m}{2}\Bigg|^p\int\limits_{0}^{2\pi}|1+e^{i\gamma}|^pd\gamma
  \end{align}
 
 For $\big|B[P\circ\sigma](e^{i\theta})-\alpha B[P](e^{i\theta})\big|+\dfrac{(|R^n-\alpha||\Lambda_n|-|1-\alpha||\lambda_0|)m}{2} = 0$, then \eqref{r6}  is trivially true. Using this in \eqref{r4}, we conclude for every real or complex number $\alpha$ with $|\alpha|\leq 1,$ $R>1$ and $ p > 0$,
 \begin{align*}
 \int\limits_{0}^{2\pi}\Bigg|\big|B[P\circ\sigma]&(e^{i\theta})-\alpha B[P](e^{i\theta})\big|+\dfrac{(|R^n-\alpha||\Lambda_n|-|1-\alpha||\lambda_0|)m}{2}\Bigg|^pd\theta\int\limits_{0}^{2\pi}|1+e^{i\gamma}|^pd\gamma\\ 
 &\leq \int\limits_{0}^{2\pi}\Big|(R^{n}-\alpha )\Lambda_n e^{i\gamma}+(1-\bar{\alpha})\bar{\lambda_{0}}\Big|^{p}d\gamma\int_{0}^{2\pi}\left|P(e^{i\theta})\right|^{p}d\theta
 \end{align*}
 This gives for every real or complex number $\delta,\alpha$ with $|\delta|\leq 1,$ $|\alpha|\leq 1,$ $R>1$ and $\gamma$ real
 \begin{align}\nonumber\label{r7}
 \int\limits_{0}^{2\pi}\Bigg|B[P\circ\sigma]&(e^{i\theta})-\alpha B[P](e^{i\theta})+\delta\Big\{\dfrac{(|R^n-\alpha||\Lambda_n|-|1-\alpha||\lambda_0|)m}{2}\Big\}\Bigg|^pd\theta\int\limits_{0}^{2\pi}|1+e^{i\gamma}|^pd\gamma\\ 
 &\leq \int\limits_{0}^{2\pi}\Big|(R^{n}-\alpha )\Lambda_n e^{i\gamma}+(1-\bar{\alpha})\bar{\lambda_{0}}\Big|^{p}d\gamma\int_{0}^{2\pi}\left|P(e^{i\theta})\right|^{p}d\theta.
 \end{align} 
 Since
 \begin{align}\nonumber\label{r8}
\int\limits_{0}^{2\pi}\Big|(R^{n}&-\alpha )\Lambda_n e^{i\gamma}+(1-\bar{\alpha})\bar{\lambda_{0}}\Big|^{p}d\gamma\int_{0}^{2\pi}\left|P(e^{i\theta})\right|^{p}d\theta\\\nonumber
&=\int\limits_{0}^{2\pi}\Big||(R^{n}-\alpha )\Lambda_n| e^{i\gamma}+|(1-\bar{\alpha})\bar{\lambda_{0}}|\Big|^{p}d\gamma\int_{0}^{2\pi}\left|P(e^{i\theta})\right|^{p}d\theta\\\nonumber
&=\int\limits_{0}^{2\pi}\Big||(R^{n}-\alpha )\Lambda_n| e^{i\gamma}+|(1-\alpha)\lambda_{0}|\Big|^{p}d\gamma\int_{0}^{2\pi}\left|P(e^{i\theta})\right|^{p}d\theta,\\
 &=\int\limits_{0}^{2\pi}\Big|(R^{n}-\alpha )\Lambda_n e^{i\gamma}+(1-\alpha)\lambda_{0}\Big|^{p}d\gamma\int_{0}^{2\pi}\left|P(e^{i\theta})\right|^{p}d\theta,
 \end{align}
 the desired result follows immediately by combining \eqref{r7} and \eqref{r8}. This completes the proof of Theorem \ref{t2} for $p>0$. To establish this result for $p=0$, we simply let $p\rightarrow 0+$.

  \end{proof}


\begin{thebibliography}{99}
    \bibitem{1} N. C. ANKENY and T. J. RIVLIN, On a theorm of S.Bernstein, \textit{Pacific J. Math.,} 5 (1955), 849 - 852.
    \bibitem{va} V.V.ARESTOV, On integral inequalities for trigonometric polynimials and their derivatives, Izv. Akad. Nauk SSSR Ser. Mat. 45 (19810,3-22[in Russian]. English translation; Math.USSR-Izv.,18 (1982),1-17. 
    \bibitem{a89} A. AZIZ, A new proof of a theorem of De Bruijn, \textit{Proc. Amer. Math. Soc.,} 106 (1989) 345-350.
    \bibitem{ar97} A. AZIZ and N. A. RATHER, $L^p$ inequalities for polynomials, \textit{Glasnik Mathematicki} 32 (1997) 39-43.
    \bibitem{ar99} A. AZIZ and N. A. RATHER, Some compact generalizations of Zygmund-type inequalities for polynomials, \textit{Nonlinear Studies,} 6 (1999), 241 - 255.
    \bibitem{br} R.P.BOAS, Jr., and Q.I.RAHMAN, $L^{p}$ inequalities for polynomials and entire functions,Arch. Rational Mech. Anal.,11 (1962),34-39. 
\bibitem{nb} N.G.BRIIJN, Inequalities concerning polynomials in the complex domain,Nederal. Akad.Wetensch. Proc., 50(1947),1265-1272.
\bibitem{dg} K. K. DEWAN and N. K. GOVIL, An inequality for self-inversive polynomials, \textit{J. Math. Anal. Appl.,} 45 (1983) 490.
    \bibitem{h}G.H.HARDY, The mean value of the modulus of an analytic function, Proc. London Math. Soc., 14(1915), 269-277.
   \bibitem{7}P. D. LAX, Proof of a conjecture of P.Erd\"{o}s on the derivative of a polynomial, \textit{Bull. Amer. Math. Soc.,} 50 (1944), 509-513.
   \bibitem{mm} M. MARDEN, \textit{Geometry of Polynomials,}  Math. Surves No. 3, Amer. Math. Soc., Providence, R I (1966).
    \bibitem{8}G. V. MILOVANOVIC, D. S. MITRINOVIC and TH. M. RASSIAS, \textit{Topics in Polynomials: Extremal Properties, Inequalities}, Zeros, World scientific Publishing Co., Singapore, (1994).
    \bibitem{9}G. P\'OLYA an G. SZEG\"{O}, \textit{Aufgaben und lehrs\"{a}tze aus der Analysis,} Springer-Verlag, Berlin (1925).
    \bibitem{qir} Q. I. RAHMAN, Functions of exponential type, \textit{Trans. Amer. Soc.}, 135(1969), 295 – 309
   \bibitem{rs}Q. I. RAHMAN and G. SCHMESSIER, \textit{Analytic theory of polynomials,} Claredon Press, Oxford, 2002.
   \bibitem{rs83}Q. I. RAHMAN and G. SCHMESSIER, Les Ine\'qualitu\'es de Markoff et de Bernstein,Presses Univ. Montr\'eal, Montr\'eal, Quebec (1983).
   \bibitem{rs88}Q. I. RAHMAN and G. SCHMESSIER, $L^{p}$ inequalities for polynomials, J. Approx. Theory,53(1988),26-32.
   \bibitem{r}M.RIESZ, Formula d'interpolation pour la d\'eriv\'ee d'un polynome trigonom\'etrique, C.R.Acad. Sci,Paris,158(1914), 1152-1254.
    \bibitem{11}A. C. SCHAFFER, Inequalities of A. Markoff and S. Bernstein for polynomials and related functions, \textit{Bull. Amer. Math. Soc.,} 47(1941), 565-579.
    \bibitem{wl} W. M. SHAH and A. LIMAN, Integral estimates for the family of B-operators, Operator and Matrices, 5 (2011), 79-87.
    \bibitem{z}A. ZYGMUND, A remark on conjugate series, Proc. London Math. Soc., 34(1932),292-400.
    
   \end{thebibliography}
 \end{document}